\newtheorem{thm}{Theorem}[section]
\newtheorem{defn}[thm]{Definition}
\newtheorem{lem}[thm]{Lemma}
\newtheorem{conj}[thm]{Conjecture}
\renewcommand{\And}{\operatorname{And}}
\newcommand{\Compn}{\overline{C_n}}
\newcommand{\F}{\mathcal{F}}
\newcommand{\td}{\operatorname{td}}
\title{On 1-uniqueness and dense critical graphs for tree-depth}
\author{Michael D. Barrus\thanks{Department of Mathematics, University of Rhode Island, Kingston, Rhode Island 02881, United States; email: \texttt{barrus@uri.edu}} ~and
John Sinkovic\thanks{Department of Combinatorics and Optimization, University of Waterloo, Waterloo, Ontario, N2L 2G1, Canada; email: \texttt{johnsinkovic@gmail.com}}}
\begin{document}

\maketitle

\begin{abstract}
The tree-depth of $G$ is the smallest value of $k$ for which a labeling of the vertices of $G$ with elements from $\{1,\dots,k\}$ exists such that any path joining two vertices with the same label contains a vertex having a higher label.  The graph $G$ is $k$-critical if it has tree-depth $k$ and every proper minor of $G$ has smaller tree-depth.

Motivated by a conjecture on the maximum degree of $k$-critical graphs, we consider the property of 1-uniqueness, wherein any vertex of a critical graph can be the unique vertex receiving label 1 in an optimal labeling. Contrary to an earlier conjecture, we construct examples of critical graphs that are not 1-unique and show that 1-unique graphs can have arbitrarily many more edges than certain critical spanning subgraphs. We also show that $(n-1)$-critical graphs are 1-unique and use 1-uniqueness to show that the Andr\'{a}sfai graphs are critical with respect to tree-depth.

\medskip
\emph{Keywords:} Graph minor, tree-depth, vertex ranking, Andr\'{a}sfai graph
\end{abstract}

\section{Introduction}
The tree-depth of a graph $G$ is the smallest size of a set $\{1,\dots,n\}$ of labels with which the vertices of $G$ may be labeled so that any path between two vertices with the same label contains a vertex receiving a higher label. Equivalently, the tree-depth is the minimum number of steps needed to delete all vertices from $G$ if at each step at most one vertex may be deleted from each current component. Tree-depth is denoted $\td(G)$ and has also been called the vertex ranking number or ordered chromatic number of $G$. For a sampling of results on tree-depth, and bibliographic references to other sources, see~\cite{BarNoyEtAl12,path,BodlaenderEtAl98,ChangEtAl10,IyerEtAl88,NesetrilOssonadeMendez12,NesetrilOssonadeMendez06,NesetrilOssonadeMendez08}.

One fundamental property of tree-depth is its monotonicity under the graph minor relationship; as noted in~\cite{NesetrilOssonadeMendez12}, if $G$ is a minor of $H$, then $\td(G) \leq \td(H)$. If $M$ is a graph with tree-depth $k$ such that every proper minor of $M$ has tree-depth less than $k$, we say that $M$ is \emph{minor-critical}, or simply \emph{critical} or \emph{$k$-critical}. (For clarity, we note here that some authors use ``critical'' when discussing the context of subgraphs, rather than our present context of minors.)

Because of the monotonicity of tree-depth under the minor relationship, it follows that every graph with tree-depth $k$ has a $k$-critical minor, and the graphs with tree-depth at most $k$ are characterized in terms of a list of forbidden minors; the minimal such list consists of all $(k+1)$-critical graphs.

In~\cite{DGT}, Dvo{\v{r}}{\'a}k, Giannopoulou, and Thilikos initiated the study of critical graphs having small tree-depth. They determined all $k$-critical graphs for $k \leq 4$ and exhibited a construction of critical graphs from smaller ones; this construction is sufficient to construct all critical trees of any tree-depth.

In examining the critical graphs with small tree-depth, a number of apparent patterns suggest themselves. We mention two conjectured relationships.
\begin{conj} \label{conj: order, max degree} Let $G$ be a $k$-critical graph.
\begin{enumerate}
\item[\textup{(a)}] \textup{(\cite{DGT})} $G$ has at most $2^{k-1}$ vertices.
\item[\textup{(b)}] \textup{(\cite{BarrusSinkovic15})} $G$ has maximum degree at most $k-1$.
\end{enumerate}
\end{conj}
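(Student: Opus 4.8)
The plan is to reduce both parts to the setting of connected critical graphs and then to exploit the recursive description of tree-depth. First I would record that a $k$-critical graph $G$ must be connected: were it disconnected, some component would realize $\td(G)=k$ while the remaining components could be deleted (a minor operation) without lowering the tree-depth, contradicting criticality. For connected $G$ the recursion $\td(G)=1+\min_{v}\td(G-v)$ applies, and since each $G-v$ is a proper minor we have $\td(G-v)\le k-1$; combining this with the recursion forces the uniform statement that $\td(G-v)=k-1$ for \emph{every} vertex $v$. This uniformity is the structural engine behind both bounds.

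For part (b) I would argue locally, through the labelings themselves, which is exactly where 1-uniqueness enters. Let $v$ be a vertex of maximum degree and suppose we may choose an optimal labeling $c\colon V(G)\to\{1,\dots,k\}$ in which $v$ is the unique vertex receiving label $1$ -- this is precisely the hypothesis of 1-uniqueness. Then no neighbor of $v$ has label $1$, so each neighbor's label lies in $\{2,\dots,k\}$. Moreover two neighbors $u,w$ cannot share a label $\ell$: the path $u,v,w$ would join two vertices of label $\ell$ while its only interior vertex $v$ carries the smaller label $1$, violating the defining condition of a valid labeling. Hence the neighbors of $v$ receive pairwise distinct labels from the $k-1$ values $\{2,\dots,k\}$, giving $\deg(v)\le k-1$. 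The difficulty is thereby transferred entirely to establishing 1-uniqueness, and the main obstacle -- shown by the examples of non-1-unique critical graphs constructed in this paper -- is that not every critical graph is 1-unique, so this route secures (b) only for the (large) class of 1-unique critical graphs and leaves the general conjecture open.

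For part (a) I would attempt induction on $k$ using the uniform property above. Choosing an optimal root $r$, the graph $G-r$ has tree-depth $k-1$, so each of its components has tree-depth at most $k-1$ and at least one attains $k-1$. If I could show that these components are themselves $(k-1)$-critical and that criticality of $G$ forces only boundedly many of them to be large, then the inductive bound $|V(C)|\le 2^{k-2}$ would combine with $|V(G)|=1+\sum_{C}|V(C)|$ to yield $|V(G)|\le 2^{k-1}$, tight already in small cases such as the $3$-critical path on $4=2^{2}$ vertices. The main obstacle here is controlling the component structure of $G-r$: a priori $G-r$ may have many components, and a component of a critical graph need not be critical, so a naive sum over components overshoots the target. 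Making this step precise -- ruling out ``star-like'' critical graphs in which a single root is attached to many critical pieces -- is the crux, and I expect it to require a careful analysis of how deleting or contracting the edges joining $r$ to a component changes the tree-depth.
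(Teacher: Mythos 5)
Your target here is Conjecture~\ref{conj: order, max degree}, which the paper does not prove; it explicitly states that both parts ``presently remain open in their full generality.'' So there is no proof in the paper to compare against, and your proposal does not close the gap either. Your preliminary reductions are sound: $k$-critical graphs are connected, and $\td(G-v)=k-1$ for every vertex $v$ does follow from criticality together with the recursion $\td(G)=1+\min_v\td(G-v)$. But for part (b) your argument is conditional on exactly the hypothesis that this paper shows can fail: you assume that a maximum-degree vertex $v$ admits an optimal labeling in which it is the unique vertex labeled $1$, i.e.\ that $v$ is $1$-unique. The implication ``$1$-unique $k$-critical graphs have maximum degree at most $k-1$'' is already stated verbatim in the paper's introduction; the main negative content of the paper (Section~\ref{subsec: counterexamples}) is that Conjecture~\ref{conj: false} is false: the graphs $H_n$ obtained by subdividing all edges at one vertex of $K_n$ are $(n+1)$-critical yet have a non-$1$-unique vertex, and that vertex (the subdivision center) is in fact a vertex of maximum degree in $H_n$. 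So the labeling you need cannot be chosen at an arbitrary maximum-degree vertex, and your route proves (b) only for the subclass of $1$-unique critical graphs, which is exactly where the literature already stood; as the paper's closing remark says, a genuinely different approach is required.

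For part (a) the gap is the one you name yourself, and it is fatal as the argument stands: the components of $G-r$ need not be $(k-1)$-critical (only some of their minors are), so the inductive bound $2^{k-2}$ does not apply to them directly; and even if it did, $G-r$ may a priori have $m\geq 3$ components, giving $1+m\cdot 2^{k-2}>2^{k-1}$, so the naive sum overshoots. Nothing in your proposal, and no lemma in the paper, rules out such star-like configurations. In sum, both halves of your proposal reduce the conjecture to statements that are themselves open (or, in the case of universal $1$-uniqueness, false), so neither part is established.
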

\noindent Both conjectures presently remain open in their full generality.

In~\cite{BarrusSinkovic15}, the authors observed that item (b) above is true for a special class of graphs, the 1-unique graphs. A graph $G$ is \emph{1-unique} if for every vertex $v$ in $G$, there exists a labeling of the vertices of $G$ with labels from $\{1,\dots,\td(G)\}$ having the defining requirement of tree-depth and also having the property that $v$ is the unique vertex of $G$ receiving the label 1. A 1-unique $k$-critical graph must have maximum degree $k-1$, since the neighbors of any vertex labelled 1 must receive distinct labels.

In~\cite{BarrusSinkovic15} the authors established some similarities between tree-depth criticality and 1-uniqueness. In particular, they showed that though 1-unique graphs need not be critical, if $G$ is any 1-unique graph, then $G$ has a subset of edges that can be removed so as to leave a spanning subgraph of $G$ that is $\td(G)$-critical. Furthermore, every critical graph with tree-depth at most 4 is 1-unique, and the authors noted that every critical graph constructed through a certain generalization of the algorithm in~\cite{DGT} is also 1-unique. These facts led the authors to the following.

\begin{conj} \label{conj: false}
Every critical graph is 1-unique.
\end{conj}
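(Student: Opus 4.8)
The natural line of attack is induction on $k = \td(G)$, exploiting the one structural consequence of criticality that is easy to extract. Since $G$ is critical, deleting any vertex is a proper-minor operation, so $\td(G-w) \le k-1$ for every $w$; and since $G$ is connected (a disconnected critical graph would contain, by deleting all but a heaviest component, a proper minor of the same tree-depth), the recursion $\td(G) = 1 + \min_w \td(G-w)$ forces $\td(G-w) = k-1$ for every $w$. Thus every vertex of $G$ can serve as the root, the unique vertex of top label $k$, of an optimal ranking. The goal is to upgrade this freedom at the top of the ranking into the desired freedom at the bottom: given a target vertex $v$, produce an optimal ranking in which $v$ alone receives label $1$.

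I would use the standard reformulation that a labeling $c$ with values in $\{1,\dots,k\}$ is a valid ranking exactly when, for each $i$, every connected component of the subgraph induced by $\{u : c(u) \le i\}$ contains at most one vertex of label $i$. Under this description, asking that $v$ be the unique vertex of label $1$ amounts to taking an optimal ranking $d$ of $G-v$ (relabeled to use $\{2,\dots,k\}$) and extending it by $c(v)=1$; the extension is valid precisely when inserting $v$ at the bottom merges no two distinct components that each already carry a vertex of the same top label, so in particular the neighbors of $v$ must receive pairwise distinct labels. The plan is therefore to choose a root $r \ne v$ so that $G-r$ has a single component $C^\ast$ of tree-depth $k-1$ with $v \in C^\ast$ and every other component of tree-depth at most $k-2$ (so those components can be ranked without using label $1$ at all), and then to recurse inside $C^\ast$, pushing $v$ to the bottom.

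The main obstacle is that neither half of this recursion is automatic. The components of $G-r$ need not themselves be critical, so the inductive hypothesis ``critical implies $1$-unique'' cannot be applied to $C^\ast$ directly; one would instead have to carry a stronger invariant through the induction, essentially the bottom-insertion property above, for arbitrary non-critical graphs, and it is not clear that criticality of $G$ transmits enough structure to its pieces to sustain such an invariant. Equally delicate is arranging, simultaneously, that some root $r$ isolates $v$ in a single heavy component while keeping the neighborhood of $v$ spread across distinct labels: criticality lets us move the top label freely, but gives little leverage over how the neighbors of a fixed $v$ are distributed among the lower levels of an optimal ranking.

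I expect the argument to stall exactly here, and I would read the stalling point as diagnostic rather than merely technical. If there is a critical graph possessing a vertex $v$ whose neighborhood is forced together, so that every optimal ranking of $G-v$ joins two components of equal top label at $v$ and hence forbids seating $v$ alone at label $1$, then the statement fails. Given how weakly criticality constrains the low labels, I would in fact search for such a ``bottleneck'' configuration before investing further effort in the induction, since the obstacle identified above looks less like a gap to be patched than like the natural location of a genuine counterexample.
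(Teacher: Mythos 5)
Your closing instinct is exactly right, and it is the heart of the matter: this conjecture is \emph{false}, and the paper states it only in order to refute it. Your positive attempt stalls precisely where you say it does. Criticality pins down the top of every optimal ranking (in a connected critical graph $\td(G-w)=k-1$ for every $w$, so any vertex can be the unique root), but it gives essentially no control over the bottom labels, and the inductive invariant you would need for the components of $G-r$ is not available there. The genuine gap is that, as written, your proposal is neither a proof nor a disproof: it ends with the intention to search for a ``bottleneck'' configuration, but the entire content of resolving the statement lies in actually exhibiting one.

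The paper produces an explicit infinite family realizing exactly the obstruction you describe. For $n\ge 4$, let $H_n$ be the graph obtained from $K_n$ by subdividing (once) every edge incident with a fixed vertex $v$; the paper shows $H_n$ is $(n+1)$-critical. Non-1-uniqueness of $v$ is certified not by analyzing rankings of $H_n-v$ directly, but via the star-clique criterion (Theorem~\ref{thm:starclique}): $v$ is 1-unique if and only if deleting $v$ and completing $N(v)$ into a clique lowers the tree-depth. This is the clean formalization of your requirement that the neighbors of $v$ be spread across distinct labels and kept in separate pieces of the ranking. In $H_n$ the neighborhood of $v$ consists of the $n-1$ subdivision vertices, each also matched to its own vertex of the remaining clique $K_{n-1}$, so the star-clique transform at $v$ yields $K_{n-1}\Box K_2$, whose tree-depth is $\lceil 3(n-1)/2\rceil \ge n+1 = \td(H_n)$ for $n\ge 4$; hence $v$ is not 1-unique. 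Two further points connect to your analysis: the paper also proves the complementary positive result that $(n-1)$-critical graphs \emph{are} 1-unique, so any counterexample must have tree-depth at most $n(G)-2$ (the family $H_n$ meets this threshold at $n=4$); and if you do pursue your proposed search, the star-clique transform is what makes it computationally feasible, since it converts ``can $v$ sit alone at label 1'' into a single tree-depth computation --- this is exactly how the paper's computer search located 5-critical counterexamples on at most nine vertices.
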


As it happens, Conjecture~\ref{conj: false} is false infinitely often, as we will shortly show. 
In Section 2 we discuss a computer search that found a number of counterexamples, one of which we generalize to an infinite family of non-1-unique critical graphs. In Section 3 we show that the edge subset that can be deleted from a 1-unique graph to leave a critical spanning subgraph may be arbitrarily large. These results have the effect of somewhat separating the properties of criticality and 1-uniqueness, which appeared in~\cite{BarrusSinkovic15} to be closely related.

However, we will also show that the property of 1-uniqueness can be used to good effect in questions on criticality. In Section 4 we use 1-uniqueness to efficiently show that the Andr\'{a}sfai graphs are (1-unique) critical graphs. 

The examples and counterexamples considered in this paper follow a theme of dense graphs (where we take `dense' here to mean that the number of edges in $n$-vertex members of the family is at least a constant fraction of $\binom{n}{2}$). This is true for the family of counterexamples to Conjecture~\ref{conj: false} presented in Section 2, as well as for the families of graphs in Section 3 that illustrate differences between the number of edges in 1-unique graphs and their critical subgraphs. These examples stand in contrast to the 1-unique critical trees and other often-sparse critical graphs shown and constructed in~\cite{DGT} and~\cite{BarrusSinkovic15}, which motivated Conjecture~\ref{conj: false}. On the other hand, the relationship between tree-depth criticality and 1-uniqueness cannot be simply explained by sparseness, as in Section 2 we show that the $(n-1)$-critical graphs are both dense and 1-unique, and the Andr\'{a}sfai graphs in Section 4 are likewise examples of dense, 1-unique, critical graphs.

We define a \emph{labeling} of a graph $G$ to be an assignment of the vertices of $G$ with the positive integers. If every path between any two vertices with the same label contains a vertex having a higher label, we call the labeling \emph{feasible}; thus $\td(G)$ is the smallest number of labels necessary for a feasible labeling. We use $V(G)$ and $N_G(v)$ to denote the vertex set of $G$ and the (open) neighborhood in $G$ of vertex $v$,  respectively. The complete graph and cycle with $n$ vertices are referred to respectively as $K_n$ and $C_n$. The Cartesisan product of graphs $G$ and $H$ is denoted by $G \Box H$, the disjoint union of $G$ and $H$ is denoted by $G+H$, and the disjoint union of $m$ copies of $G$ is denoted by $mG$.

\section{Critical graphs with high tree-depth}
In this section we consider graphs whose tree-depth differs from the number of vertices by at most a fixed constant. The extremal example is the complete graph $K_n$, which is the unique graph on $n$ vertices where these parameters are equal. We show that graphs with similar relatively high tree-depths form hereditary classes of graphs. These results will help at the end of this section, where we prove a special case of Conjecture~\ref{conj: false}, that critical graphs with tree-depth almost as high as that of $K_n$ are 1-unique. The results will also be useful in the next section, where we separate the notions of tree-depth criticality and 1-uniqueness.

A graph class is hereditary, i.e., closed under taking induced subgraphs, if and only if it can be characterized by a collection of forbidden induced subgraphs. We show that this is true of graphs with relatively high tree-depths compared to their respective numbers of vertices.

\begin{thm} \label{thm: high td hereditary}
Given a nonnegative integer $k$, let $\F_k$ denote the set of minimal elements, under the induced subgraph ordering, of all graphs $H$ for which $n(H)- \td(H) = k+1$. For all graphs $G$, the graph $G$ satisfies $\td(G) \geq n(G) - k$ if and only if $G$ is $\F_k$-free.
\end{thm}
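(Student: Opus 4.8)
The plan is to recognize that the class of graphs satisfying $\td(G) \ge n(G) - k$ is hereditary, and then to verify that $\F_k$ is precisely its set of minimal forbidden induced subgraphs. The crux is a single-vertex deletion lemma: for every graph $G$ and every vertex $v$, $\td(G) - 1 \le \td(G - v) \le \td(G)$. The upper bound is immediate, since $G - v$ is a minor of $G$ (equivalently, restricting a feasible labeling of $G$ to $V(G) \setminus \{v\}$ remains feasible). For the lower bound I would take an optimal feasible labeling of $G - v$ using labels $\{1, \dots, \td(G-v)\}$ and extend it to $G$ by assigning $v$ the new top label $\td(G-v) + 1$; any path between two equally labeled vertices either avoids $v$, and so inherits a higher-labeled interior vertex from feasibility on $G - v$, or passes through $v$, which now carries the strictly largest label. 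Hence $\td(G) \le \td(G-v) + 1$, giving the lower bound.

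Combining the two bounds with $n(G - v) = n(G) - 1$ shows that the quantity $n(G) - \td(G)$ changes by either $0$ or $1$ under the deletion of a single vertex, and in particular is non-increasing: $n(G-v) - \td(G-v) \le n(G) - \td(G)$ for every vertex $v$. Iterating over a sequence of deletions, $n(H) - \td(H) \le n(G) - \td(G)$ for every induced subgraph $H$ of $G$. Thus the property $n(G) - \td(G) \le k$, equivalently $\td(G) \ge n(G) - k$, is closed under taking induced subgraphs, and so is characterized by a (unique, up to isomorphism) family of minimal forbidden induced subgraphs.

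It remains to identify those minimal forbidden subgraphs with $\F_k$. Call a graph \emph{bad} if $n(H) - \td(H) \ge k+1$; by heredity, the good graphs are exactly those containing no bad induced subgraph, equivalently those containing no \emph{minimal} bad induced subgraph. I would show the minimal bad graphs are precisely the members of $\F_k$. On one hand, if $H$ is bad with $n(H) - \td(H) \ge k+2$, then deleting any vertex leaves the value at least $k+1$, so such an $H$ is not minimal; hence every minimal bad graph satisfies $n(H) - \td(H) = k+1$, and minimality also forbids a proper induced subgraph of value $k+1$, so $H \in \F_k$. On the other hand, if $H \in \F_k$ then $H$ is bad, and since $n - \td$ never increases under vertex deletion, every proper induced subgraph of $H$ has value at most $k+1$; any bad such subgraph would therefore have value exactly $k+1$, contradicting minimality of $H$ in the set $\{H : n(H) - \td(H) = k+1\}$. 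Thus $H$ is a minimal bad graph, $\F_k$ is exactly the set of minimal bad graphs, and the stated biconditional follows.

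The step I expect to require the most care is this last one: matching the definition of $\F_k$, which uses the exact equation $n(H) - \td(H) = k+1$, with the honest minimal forbidden subgraphs, which a priori satisfy only the inequality $n(H) - \td(H) \ge k+1$. This matching depends entirely on the fact that a single-vertex deletion lowers $n - \td$ by at most one, so that the value cannot exceed $k+1$ in any induced subgraph of a graph achieving $k+1$. The deletion lemma is thus doing the real work, and I would state and prove it cleanly before turning to the otherwise routine bookkeeping.
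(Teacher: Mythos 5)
Your proposal is correct and takes essentially the same route as the paper: both arguments rest entirely on the fact that deleting a single vertex lowers tree-depth by at most one, so the surplus $n(G)-\td(G)$ is non-increasing under vertex deletion and drops by at most one per step, which is exactly what converts the inequality $n(H)-\td(H)\geq k+1$ into the equality defining $\F_k$. The only differences are organizational: you prove the deletion lemma that the paper merely asserts, and you phrase the hard direction as ``a minimal bad graph has surplus exactly $k+1$'' where the paper runs a discrete intermediate-value argument along a chain of deletions down to $K_1$ --- the same idea in different packaging.
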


To illustrate this theorem, we identify a few special cases, some of which will be useful later in the paper.

\begin{lem} \label{lem: high td forb sub lists}
 Let $G$ be a graph with $n$ vertices.
\begin{enumerate}
\item[\textup{(a)}] The graph $G$ satisfies $\td(G) = n$ if and only if $G$ is $\{2K_1\}$-free.
\item[\textup{(b)}] The graph $G$ satisfies $\td(G) \geq n-1$ if and only if $G$ is $\{3K_1,2K_2\}$-free.
\item[\textup{(c)}] The graph $G$ satisfies $\td(G) \geq n-2$ if and only if $G$ is $\{4K_1,2K_2+K_1,P_3+K_2,2K_3\}$-free.
\end{enumerate}
\end{lem}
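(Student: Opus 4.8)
The plan is to obtain all three parts as the instances $k = 0, 1, 2$ of Theorem~\ref{thm: high td hereditary}. Once that theorem is available, the whole task reduces to identifying the families $\F_0$, $\F_1$, and $\F_2$ of minimal graphs $H$ (under the induced subgraph order) with $n(H) - \td(H)$ equal to $1$, $2$, and $3$, respectively. It is convenient to write $\mathrm{def}(G) = n(G) - \td(G)$ for the \emph{deficiency}, so that $\F_k$ is exactly the set of minimal graphs of deficiency $k+1$.

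First I would record two structural facts about deficiency. Since $\td(G-v) \le \td(G) \le \td(G-v)+1$ for every vertex $v$ (the first inequality is monotonicity of tree-depth under induced subgraphs, the second follows by giving $v$ the top label in an optimal labeling of $G-v$), the deficiency satisfies $\mathrm{def}(G-v) \le \mathrm{def}(G) \le \mathrm{def}(G-v)+1$; thus deficiency never increases under vertex deletion and drops by at most one at a time. Second, for a connected graph $G$ one has $\td(G) = 1 + \min_v \td(G-v)$, whence $\mathrm{def}(G) = \max_v \mathrm{def}(G-v)$; so whenever $\mathrm{def}(G) \ge 1$ there is a vertex $v$ with $\mathrm{def}(G-v) = \mathrm{def}(G)$, and $G$ is not minimal. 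Consequently every member of every $\F_k$ with $k \ge 0$ is disconnected. For a disconnected graph with components $H_1, \dots, H_c$ I would combine $\td(H) = \max_i \td(H_i)$ with $n(H) = \sum_i n(H_i)$ to obtain the formula $\mathrm{def}(H) = \mathrm{def}(H_j) + \sum_{i \ne j} n(H_i)$, where $H_j$ is a component of maximum tree-depth. This formula is the main computational tool.

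With these facts the three parts become finite enumerations. For part (a), a deficiency-$0$ graph satisfies $\td = n$, which forces completeness, so a minimal deficiency-$1$ graph has every proper induced subgraph complete; two non-adjacent vertices already suffice, giving $\F_0 = \{2K_1\}$. For (b) and (c) I would split on the value of $\mathrm{def}(H_j)$ for the deepest component $H_j$: the remaining components must then account for exactly $2 - \mathrm{def}(H_j)$ or $3 - \mathrm{def}(H_j)$ vertices. A complete deepest component $K_m$ contributes nothing to the deficiency, so it can be shrunk to the smallest size compatible with remaining deepest without changing $\mathrm{def}(H)$; and a connected deepest component of positive deficiency is non-complete, hence contains an induced $2K_1$ (and properly contains an induced $P_3$ once it has more than three vertices), which exhibits a strictly smaller graph of the same deficiency. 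Running this reduction through the cases collapses each branch to at most one survivor, yielding $\F_1 = \{3K_1, 2K_2\}$ and $\F_2 = \{4K_1, 2K_2 + K_1, P_3 + K_2, 2K_3\}$, after confirming minimality of each survivor by deleting one vertex at a time.

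I expect the main obstacle to be the case analysis for $\F_2$: there the deepest component may have deficiency $0$, $1$, or $2$, and in the deficiency-$0$ case the three remaining vertices can be arranged as $3K_1$, $K_1 + K_2$, $P_3$, or $K_3$, so the bookkeeping is the most delicate step. The decisive simplifications are the observation that minimal graphs are disconnected together with the two reduction principles above, which reduce every case to a single candidate and turn the minimality verifications into routine one-vertex-deletion checks.
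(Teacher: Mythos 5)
Your route is genuinely different from the paper's. The paper never decomposes into connected components: it proves a lemma (Lemma~\ref{lem: reduced labelings}) that every graph has an optimal \emph{reduced} labeling (repeated labels are the lowest), passes to the \emph{irreducible core} $H$ consisting of the vertices with repeated labels, and shows $s(H)=s(G)$ and $\td(H)\le s(H)$, where $s=n-\td$ is your deficiency. Its case analysis is then driven by $\td(H)\in\{1,2,3\}$, using facts like ``tree-depth $\le 2$ means a forest of stars'' and ``a core with every label repeated is disconnected.'' Your plan instead rests on the two standard recursions $\td(G)=1+\min_v\td(G-v)$ for connected $G$ and $\td=\max$ over components, giving disconnectedness of minimal graphs and the formula $s(H)=s(H_j)+\sum_{i\ne j}n(H_i)$. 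Both approaches end in finite case analyses of comparable size; yours is more elementary (no labeling surgery) and naturally lets part (c) lean on part (b), while the paper's reduced-labeling lemma is self-contained machinery that bounds the core's tree-depth directly.

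There is, however, one branch of part (c) where your stated reduction fails: the deepest component $H_j$ has deficiency $2$ and the rest of $H$ is a single $K_1$. Your rule for a positive-deficiency deepest component is to extract an induced $2K_1$ (or a proper induced $P_3$) and claim this ``exhibits a strictly smaller graph of the same deficiency.'' Here it does not: replacing $H_j$ by $2K_1$ yields $3K_1$, of deficiency $2$, and replacing it by $P_3$ yields $P_3+K_1$, also of deficiency $2$; neither contradicts minimality of $H$, whose deficiency is $3$. (Try $H=P_5+K_1$: both of your extractions miss the deficiency-$3$ proper subgraph $P_3+2K_1$ that actually sits inside it.) The branch is fixable with a tool you already wrote down, applied to the component rather than to $H$: by your connected-graph fact there is $v$ with $s(H_j-v)=s(H_j)=2$, hence $\td(H_j-v)=\td(H_j)-1\ge 1$, and a short computation gives $s(H-v)=3$, contradicting minimality. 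Alternatively, minimality of $H$ forces every proper induced subgraph of $H_j$ to have deficiency at most $1$, so $H_j$ would be a minimal deficiency-$2$ graph; by part (b) those are $3K_1$ and $2K_2$, both disconnected, contradiction.

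A smaller caution on part (a): you assert that deficiency $0$ ``forces completeness,'' but in view of Theorem~\ref{thm: high td hereditary} that assertion \emph{is} the nontrivial direction of part (a), so as written the step is circular. The fact is a one-liner (give two nonadjacent vertices the same label and label everything else injectively, so a non-complete graph has $\td\le n-1$), and it should be said, since it also underlies your ``shrink the complete deepest component'' rule. Alternatively, that rule can be stated without completeness at all: since deficiency is monotone under induced subgraphs and nonnegative, \emph{every} induced subgraph of a deficiency-$0$ component again has deficiency $0$, so a deepest deficiency-$0$ component may be shrunk to any desired order.
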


We remark in passing that Lemma~\ref{lem: high td forb sub lists} suggests that the graphs with nearly equal tree-depths and orders are necessarily dense graphs.

To prove Theorem~\ref{thm: high td hereditary} and Lemma~\ref{lem: high td forb sub lists}, we first develop some terminology and a few preliminary results. For any graph $G$, define the \emph{surplus} $s(G)$ by $s(G) = n(G) - \td(G)$. Clearly $\td(G) \geq n-k$ if and only if $s(G) \leq k$, and the graphs in $\F_k$ are the minimal graphs under induced subgraph inclusion for which the surplus is $k+1$.

We now show that $s(G)$ is well-behaved under taking induced sugraphs.

\begin{proof}[Proof of Theorem~\ref{thm: high td hereditary}]
We prove the equivalent statement that $G$ has no induced subgraph with surplus $k+1$ if and only if $s(G)\leq k$.

Suppose first that $s(G) > k$. If $s(G)=k+1$ then $G$ clearly has at least one induced subgraph with surplus $k+1$. If $s(G)>k+1$, then let $v_1,\dots,v_n$ denote the vertices of $G$ under some arbitrary ordering. If we define $G_0=G$ and $G_i = G - \{v_1,\dots,v_i\}$ for each $i \in \{1,\dots,n-1\}$, then for $i\geq 1$ each $G_i$ is obtained by deleting a vertex from $G_{i-1}$. Since deleting a vertex from a graph either leaves the tree-depth unchanged or lowers it by one, it follows that $s(G_i) = s(G_{i-1})-1$ or $s(G_i)=s(G_{i-1})$. Since $s(G) > k+1$ and $G_{n-1}\cong K_1$ and hence $s(G_{n-1})=0$, it follows that $s(G_j)=k+1$ for some $j \in \{1,\dots,n-2\}$. Hence $G$ has an induced subgraph with surplus $k+1$.

Suppose instead that $G$ has an induced subgraph $H$ with surplus $k+1$. As described in the previous paragraph, deleting a single vertex from a graph either maintains the present value of the surplus or decreases it by 1, so if we imagine deleting vertices from $V(G)\setminus V(H)$ to arrive at the induced subgraph $H$, we have $s(G) \geq s(H)=k+1$, so $s(G)>k$.
\end{proof}

Before proving Lemma~\ref{lem: high td forb sub lists}, we develop a useful type of labeling. Call a feasible labeling of a graph \emph{reduced} if no label appearing on more than one vertex has a higher value than a label appearing on at most one vertex. In other words, in a reduced labeling, the repeated labels are the lowest.

\begin{lem}\label{lem: reduced labelings} \mbox{}
\begin{enumerate}
\item[(1)] Every graph $G$ has an optimal reduced labeling.
\item[(2)] If $\gamma$ is an optimal reduced labeling of $G$, and $H$ is the induced subgraph of $G$ consisting of all vertices sharing their label with some other vertex of $G$, then the restriction of $\gamma$ to $H$ is an optimal labeling of $H$; hence $s(H) = s(G)$ and $\td(H) \leq s(H)$.
\end{enumerate}
\end{lem}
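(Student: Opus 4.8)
The plan is to treat the two parts separately, using throughout the following reformulation of feasibility: a labeling $\gamma$ with values in $\{1,\dots,k\}$ is feasible if and only if, for every value $i$, the vertices receiving label $i$ lie in pairwise distinct connected components of the subgraph induced by $\{v : \gamma(v) \le i\}$ (two equally labeled vertices in a common such component would be joined by a path using no higher label, and conversely). I would also first record that any optimal labeling may be assumed to use each value in $\{1,\dots,\td(G)\}$ at least once: if some value were skipped, composing with the order-preserving bijection that closes the gap keeps the labeling feasible while lowering the largest label, contradicting optimality. Thus in an optimal reduced labeling the repeated labels are exactly the values $1,\dots,t$ for some $t$, and the singleton labels are exactly $t+1,\dots,k$, where $k=\td(G)$.

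For part (1) I would start from any optimal labeling and decrease a potential measuring how far it is from being reduced, namely the number of pairs $(a,b)$ with $a<b$, label $a$ occurring once, and label $b$ occurring more than once. If this count is positive, an elementary argument shows there is a value $a$ occurring exactly once with $a+1$ occurring more than once. I would then swap the labels $a$ and $a+1$ (relabel the unique vertex $x$ of label $a$ with $a+1$, and every vertex of label $a+1$ with $a$) and check feasibility via the reformulation above. The only level whose defining set changes is $a$: after the swap, $\{v:\gamma'(v)\le a\}$ equals $\{v:\gamma(v)\le a+1\}\setminus\{x\}$, and since deleting the single vertex $x$ can only split components, the vertices now carrying label $a$ remain in distinct components. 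All other levels are unaffected (at level $a+1$ the label is now carried by the single vertex $x$, which is trivially fine), so $\gamma'$ is again optimal; a short bookkeeping check confirms the potential drops by exactly one, and iterating reaches a reduced labeling.

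For part (2), feasibility of $\gamma$ restricts to the induced subgraph $H$, and since $\gamma|_H$ uses only the $t$ repeated values we get $\td(H)\le t$. For the reverse inequality — the crux of the lemma — I would use that, because $\gamma$ is reduced, the vertices $S=V(G)\setminus V(H)$ carry the highest and pairwise distinct labels, so $S$ behaves as a set of separators sitting above $H$. Concretely, given any optimal labeling $\delta$ of $H$ with $\td(H)=m$ labels, I would define $\gamma'$ on $G$ by $\delta$ on $H$ and by shifting the labels of $S$ up to occupy $m+1,\dots,m+(k-t)$. Any two vertices sharing a $\gamma'$-label must both lie in $H$, and a path between them either meets $S$ (whose labels exceed every label on $H$) or stays inside the induced subgraph $H$ (where $\delta$ supplies a higher vertex); hence $\gamma'$ is feasible, giving $\td(G)\le m+(k-t)$, i.e. $m\ge t$, so $\td(H)=t$ and $\gamma|_H$ is optimal. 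The stated equalities then follow by arithmetic: $n(G)=n(H)+(k-t)$ yields $s(H)=n(H)-t=s(G)$, and $n(H)\ge 2t$ (every retained label is used at least twice) gives $\td(H)=t\le n(H)-t=s(H)$.

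I expect the main obstacle to be the feasibility verification of the swap in part (1): selecting the right adjacent pair to swap and confirming, via the component reformulation, that deleting a single vertex cannot merge components so that distinctness is preserved. The separator observation underlying part (2) is the conceptual key there but should be routine to verify once the reduced structure is in hand.
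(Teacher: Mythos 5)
Your proof is correct, but it splits from the paper in an interesting way. For part (2) your argument is essentially the paper's: both proofs exploit reducedness to see that the vertices of $V(G)\setminus V(H)$ carry distinct labels lying above every label used on $H$, and both substitute a hypothetically better labeling of $H$ into $G$ (the paper phrases this as a contradiction with $\td(G)$ optimal, you phrase it as the inequality $\td(G)\le \td(H)+(k-t)$; these are the same construction). The only cosmetic difference there is the final inequality $\td(H)\le s(H)$: the paper argues via a transversal $W$ containing one vertex per label, while you use $n(H)\ge 2t$ directly; these are equivalent bookkeeping. For part (1), however, you take a genuinely different route. The paper builds the reduced labeling in a single global step: it compresses the repeated labels, preserving their order, onto $\{1,\dots,k\}$, assigns the singleton-labeled vertices injectively the labels $\{k+1,\dots,\td(G)\}$, and checks feasibility of the new labeling by tracing each monochromatic path back to the original labeling. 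You instead iterate local swaps of an adjacent singleton/repeated pair of values, with a potential function guaranteeing termination, and verify feasibility of each swap through the component-level reformulation (vertices of label $i$ must lie in distinct components of the subgraph induced by labels $\le i$), where the key point is that deleting the single vertex $x$ from a level set cannot merge components. Your version costs the normalization that every value is used plus the termination argument, but each verification step is purely local; the paper's version is shorter but its one-shot feasibility check is the more delicate case analysis. Both are sound proofs of the lemma.
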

\begin{proof}
Let $\gamma$ be a feasible labeling of $G$ using $\td(G)$ labels, and let $\ell_1,\dots,\ell_k$ denote the labels appearing on more than one vertex in $\gamma$, ordered so that $\ell_1<\dots<\ell_k$. Construct an alternate labeling $\delta$ of the vertices of $G$ as follows: for $i \in \{1,\dots,k\}$, assign label $i$ to each of the vertices having label $\ell_i$ in $\gamma$; then label the remaining unlabeled vertices arbitrarily but injectively with labels from $\{k+1,\dots,\td(G)\}$.

We claim that $\delta$ is an optimal feasible labeling of $G$. First, note that $\delta$ uses the same number of labels as the optimal labeling $\gamma$. Next note that any path in $G$ between vertices with the same label from $\delta$ corresponds to a path joining vertices with the same label from $\gamma$. In $\gamma$ some vertex $v$ on this path received a higher label. If $v$ did not share its $\gamma$-label with another vertex in $G$, then in the labeling $\delta$ the vertex $v$ receives a label higher than $k$, while the endpoints of the path receive a label less than or equal to $k$. If $v$ does share its $\gamma$-label with another vertex in $G$, then by construction since $v$'s label in $\gamma$ was higher than that of the path endpoints', this same relationship holds between the vertex labels in $\delta$. We conclude that $\delta$ is a feasible labeling of $G$; hence $G$ has an optimal labeling that is reduced, establishing (1).

Supposing now that $\gamma$ is a reduced optimal labeling of $G$, let $H$ be the graph formed by deleting from $G$ all vertices not sharing their label with some other vertex of $G$. Let $\gamma'$ denote the restriction of $\gamma$ to the graph $H$. We claim that $\gamma'$ is an optimal labeling of $H$.

First, observe that if $H$ has an optimal labeling $\beta$ using fewer colors than $\gamma'$, then we may modify the labeling $\gamma$ of $G$ by replacing the labels on vertices of $H$ with the labels from $\beta$. Since $\gamma$ was a reduced labeling, the resulting modification is still a feasible labeling and uses fewer than $\td(G)$ labels, a contradiction.

On the other hand, it is simple to verify that $\gamma'$ is a feasible labeling of $H$, so in fact $\td(H) = \td(G)-(n(G)-n(H))$, and $s(H)=s(G)$. Finally, consider a subset $W$ of $V(G)$ containing, for each label used by $\gamma$, a single vertex of $G$ receiving that label. The labels appearing on vertices in $H$ are precisely the labels $\gamma$ assigns to vertices in $V(G)-W$, so $\td(H) \leq s(G) = s(H)$, establishing (2).
\end{proof}

Given a graph $G$ and a feasible labeling $\gamma$ of $G$ as in Lemma~\ref{lem: reduced labelings}, let the graph $H$ described in the lemma be the \emph{irreducible core of $G$ under $\gamma$}. Further call $G$ \emph{irreducible under $\gamma$} if every label of $\gamma$ appears on at least two vertices of $G$. Note that any graph that is irreducible under $\gamma$ is disconnected, since there is at most one vertex per component having the maximum label in any feasible labeling.

\begin{proof}[Proof of Lemma~\ref{lem: high td forb sub lists}]
Using the language of Theorem~\ref{thm: high td hereditary}, we need simply show that $\F_0=\{2K_1\}$, $\F_1=\{3K_1,2K_2\}$, and $\F_2=\{4K_1,2K_2+K_1,P_3+K_2,2K_3\}$.

Note first that $s(2K_1) = 1$, that $s(3K_1) = s(2K_2) = 2$, and that for each $G \in \{4K_1,2K_2+K_1,P_3+K_2,2K_3\}$ we have $s(G) = 3$. It is straightforward to check that if $H$ is any proper induced subgraph of any one of the graphs listed in the statement of the lemma, then the surplus of $H$ is less than the surplus of the graph. Hence $\{2K_1\}\subseteq \F_0$, $\{3K_1,2K_2\}\subseteq \F_1$, and $\{4K_1,2K_2+K_1,P_3+K_2,2K_3\} \subseteq \F_2$.

Suppose now that $G$ is a graph for which $s(G) \in \{1,2,3\}$, and let $H$ be the irreducible core of $G$ under some optimal labeling $\gamma$. By Lemma~\ref{lem: reduced labelings} the restriction of $\gamma$ to $H$ is an optimal labeling of $H$, and $\td(H)\leq s(H)=s(G)$. Observe that $H$ contains at least $s(G)+1$ vertices, and every label in $H$ appears on at least two vertices.

If $s(G)=1$, then we claim that $H$ contains $2K_1$ as an induced subgraph. Indeed, $H$ contains at least two vertices receiving the same label under $\gamma$. Since $\gamma$ is a proper coloring of the vertices of $H$, these two vertices are nonadjacent, and thus $H$ and $G$ induce $2K_1$. We conclude that $\F_0=\{2K_1\}$.

If $s(G)=2$, then we claim that $H$ contains an induced subgraph from $\{3K_1,2K_2\}$. If $\td(H)=1$, then since $H$ has at least three vertices, $H$ contains $3K_1$ as an induced subgraph. If instead $\td(H)=2$, then since every label appears on at least two vertices of $H$, the graph $H$ contains four vertices $a,b,c,d$ such that $a$ and $b$ both receive label 1 and $c$ and $d$ receive label 2. We also know that $H$ is $\{P_4,C_4,K_3\}$-free, since $\td(H)<3$. Considering all 4-vertex graphs directly, we see that $H[\{a,b,c,d\}]$ either induces $3K_1$ or is isomorphic to $2K_2$. Hence $\F_1=\{3K_1,2K_2\}$.

If $s(G)=3$, we claim that $H$ induces an element of $\{4K_1,2K_2+K_1,P_3+K_2,2K_3\}$. Observe that if $\td(H)=1$, then as $H$ has at least four vertices, $4K_1$ is an induced subgraph. If $\td(H)=2$, then $H$ is a forest of stars; we can also say that $H$ has five vertices, and it has at least two components that have an edge. The only two such graphs are $2K_2+K_1$ and $P_3+K_2$, so $H$ is one of these. If $\td(H)=3$, then $H$ has six vertices. Since $H$ is the irreducible core of $G$ under $\gamma$, it follows that $H$ consists of exactly two components, each having tree-depth 3; the only such graph on six vertices is $2K_3$, so $\F_2=\{4K_1,2K_2+K_1,P_3+K_2,2K_3\}$, and the proof is complete.
\end{proof}

We turn our attention now to critical graphs. As we do so, it is interesting to observe that Theorem~\ref{thm: high td hereditary} provides a contrasting result on how substructures affect tree-depth. Critical graphs are the minors that force a graph to have \emph{higher} tree-depth, while as in the proof of Theorem~\ref{thm: high td hereditary} the graphs in $\mathcal{F}_k$ are precisely the subgraphs that allow for labelings with a \emph{smaller} number of labels.

We recall a definition and result from~\cite{BarrusSinkovic15}.  Given a vertex $v$ in a graph $G$, a \emph{star-clique transform at $v$} removes $v$ from $G$ and adds edges between the vertices in $N_G(v)$ so as to make them a clique.  

\begin{thm}\label{thm:starclique}(\cite{BarrusSinkovic15}) Let $v$ be a vertex of a graph $G$, and let $H$ be the graph obtained through the star-clique transform at $v$ of $G$.  Vertex $v$ is $1$-unique in $G$ if and only if $\td(H)<\td(G)$.
\end{thm}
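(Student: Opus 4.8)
The plan is to prove both implications by transferring feasible labelings between $G$ and $H$ through local surgery on paths. Throughout I will use the basic fact that a feasible labeling is in particular a proper coloring: two adjacent vertices must receive distinct labels, since the single edge joining them is a path with no interior vertex carrying a higher label.

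\emph{Direction: $v$ $1$-unique $\Rightarrow \td(H) < \td(G)$.} Starting from a feasible labeling $\gamma$ of $G$ that uses the labels $\{1,\dots,\td(G)\}$ and assigns label $1$ to $v$ alone, I would restrict $\gamma$ to $V(H)=V(G)\setminus\{v\}$, obtaining a labeling $\gamma'$ that uses only $\{2,\dots,\td(G)\}$, hence at most $\td(G)-1$ labels. The crux is to verify that $\gamma'$ is feasible for $H$, after which $\td(H)\le \td(G)-1$ follows at once. I would take any path $P$ in $H$ joining vertices $x,y$ with $\gamma'(x)=\gamma'(y)=c$, noting $c\ge 2$ since only $v$ carried label $1$, and build a walk $W$ in $G$ from $x$ to $y$ by replacing each clique edge $ab$ of $P$ (with $a,b\in N_G(v)$) by the detour $a\,v\,b$. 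Every vertex of $W$ is either an interior vertex of $P$ or a copy of $v$ (of label $1<c$); so if $\gamma'$ failed feasibility along $P$ then all of $W$ would carry labels $\le c$. Extracting a genuine path $P'$ in $G$ from $W$ and invoking feasibility of $\gamma$ on $P'$ would then force an interior vertex of label $>c$, a contradiction. Since the old edges of $P$ lie in $G-v$ and each detour uses edges at $v$, the walk $W$ and hence $P'$ really lie in $G$, and the only repeated vertex of $W$ is $v$, which makes the walk-to-path extraction routine.

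\emph{Direction: $\td(H)<\td(G)\Rightarrow v$ $1$-unique.} Here I would take an optimal feasible labeling $\beta$ of $H$, which uses at most $\td(G)-1$ labels, and define $\gamma$ on $G$ by $\gamma(u)=\beta(u)+1$ for $u\in V(H)$ and $\gamma(v)=1$. Then $\gamma$ uses at most $\td(G)$ labels and assigns label $1$ to $v$ alone, so it witnesses the $1$-uniqueness of $v$ provided it is feasible. To check feasibility I would take a path $P$ in $G$ between equally labeled $x,y$; since $v$ is the unique vertex of label $1$, neither endpoint is $v$. If $P$ avoids $v$, it is already a path in $H$, and feasibility of $\beta$ together with the upward shift supplies the required interior vertex. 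If $P$ passes through $v$, with neighbors $a,b\in N_G(v)$ along $P$, I would splice out $v$ and insert the clique edge $ab$ of $H$ to obtain a path $P'$ in $H$ between $x$ and $y$; feasibility of $\beta$ yields an interior vertex of $P'$ of larger $\beta$-value, and since that vertex is interior to $P$ as well and lies in $H$, its $\gamma$-value exceeds $\gamma(x)$, as needed.

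I expect the main obstacle to be the careful bookkeeping in these two surgeries rather than any deep idea. In the forward direction one must ensure that replacing clique edges by detours through $v$ and then discarding cycles yields a bona fide path of $G$ whose endpoints still share label $c$ and whose interior labels are all $\le c$; the degenerate case in which $P'$ collapses to a single edge $xy$ must be excluded using that $\gamma$, being feasible, properly colors $G$. In the backward direction the analogous delicate point is the short path $P=x\,v\,y$: contracting it produces the edge $xy\in E(H)$, so $\gamma(x)=\gamma(y)$ would give $\beta(x)=\beta(y)$ on an edge of $H$, contradicting feasibility of $\beta$. This rules out that degenerate case and simultaneously confirms that the neighbors of $v$ receive distinct labels under $\beta$, which is exactly what makes the clique edges of $H$ legitimately colored and the splicing argument go through.
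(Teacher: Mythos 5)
Your proof is correct. Note that the paper itself does not prove this statement—it is quoted from~\cite{BarrusSinkovic15}—but your argument (transferring an optimal labeling between $G$ and $H$, rerouting clique edges of $H$ through the label-$1$ vertex $v$ in one direction, and splicing out $v$ via a clique edge after shifting labels up by one in the other) is exactly the standard proof of this equivalence, and your handling of the degenerate single-edge cases via properness of feasible labelings closes the only delicate points.
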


\begin{lem} \label{lem: star-clique on 3K1,2K2 free}
If $G$ is a $\{3K_1,2K_2\}$-free graph and $G'$ is obtained via a star-clique transform of $G$, then $G'$ is also $\{3K_1,2K_2\}$-free.
\end{lem}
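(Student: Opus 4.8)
The plan is to argue directly from the definition of the star-clique transform, exploiting the fact that $G'$ differs from $G$ only by the deletion of a single vertex together with the \emph{addition} of edges. Let $v$ be the vertex at which the transform is performed and let $A = N_G(v)$. The structural observation underlying everything is that $V(G') = V(G)\setminus\{v\}$ and $E(G') = E(G-v) \cup \{xy : x,y \in A,\ x \neq y\}$; in particular, $G'$ is a supergraph of $G-v$ on the vertex set $V(G)\setminus\{v\}$. Consequently every nonedge of $G'$ is a nonedge of $G$, and hence every independent set of $G'$ is an independent set of $G$.

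From this observation the $3K_1$ part is immediate. An induced $3K_1$ in $G'$ consists of three pairwise nonadjacent vertices; these remain pairwise nonadjacent in $G$ and thus induce $3K_1$ there, contradicting the hypothesis that $G$ is $3K_1$-free.

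The substance of the argument is ruling out an induced $2K_2$ in $G'$. Suppose $G'$ induces $2K_2$ on vertices $\{a,b,c,d\}$ with edges $ab$ and $cd$ and with $\{a,c\},\{a,d\},\{b,c\},\{b,d\}$ all nonedges. By the observation above, all four of these cross pairs are nonedges of $G$ as well, so the entire question reduces to whether the two matching edges $ab$ and $cd$ persist as edges of $G$. I would then split into cases: if $ab$ is not an edge of $G$, then $\{a,b,c\}$ induces $3K_1$ in $G$; symmetrically, if $cd$ is not an edge of $G$, then $\{a,c,d\}$ induces $3K_1$ in $G$; and if both $ab$ and $cd$ are edges of $G$, then $\{a,b,c,d\}$ induces $2K_2$ in $G$. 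Each case contradicts the $\{3K_1,2K_2\}$-freeness of $G$, completing the argument.

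The only point requiring care—and the closest thing to an obstacle—is recognizing that when one of the matching edges of the supposed $2K_2$ is an added edge (and hence a nonedge of $G$), this nonedge combines with two of the cross nonedges to yield an induced $3K_1$ in $G$ rather than an induced $2K_2$; selecting the correct triple of vertices in each case is the crux. Notably, no appeal to the clique structure on $A$ beyond the supergraph observation is actually needed, which keeps the case analysis short.
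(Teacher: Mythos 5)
Your proof is correct, and at the crucial point it takes a genuinely different route from the paper's. Both arguments begin with the same observation that $V(G')=V(G)\setminus\{v\}$ and $E(G')\supseteq E(G-v)$, so every nonedge of $G'$ is a nonedge of $G$, and both dispose of $3K_1$ immediately on that basis. For the $2K_2$ case, however, the paper exploits the specific structure of the transform: it argues that every added edge lies inside $N_G(v)$, that the clique on $N_G(v)$ can meet the induced $2K_2$ in at most two vertices (so exactly two, forming one of its edges), and then re-uses the deleted vertex $v$ itself, together with one vertex of $N_G(v)$ in the $2K_2$ and the two vertices outside $N_G(v)$, to exhibit an induced $2K_2$ in $G$. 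You never invoke $v$ or the clique structure at all: you case on which of the matching edges $ab$, $cd$ survive in $G$, obtaining an induced $2K_2$ in $G$ when both survive and an induced $3K_1$ (such as $\{a,b,c\}$) when one of them is an added edge, since the added edge's endpoints are then nonadjacent in $G$ and the cross pairs are nonedges in any case. Your argument is shorter and in fact proves a strictly more general statement: adding \emph{any} set of edges to \emph{any} induced subgraph of a $\{3K_1,2K_2\}$-free graph preserves $\{3K_1,2K_2\}$-freeness; nothing about the star-clique transform beyond the supergraph observation is used. What the paper's approach buys is an explicit $2K_2$ certificate rebuilt from the deleted vertex $v$, but for this lemma that extra structure turns out to be unnecessary.
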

\begin{proof}
We prove the contrapositive.  Let $G'$ be a graph that is not $\{3K_1,2K_2\}$-free and that is obtained from $G$ by a star-clique transform at vertex $v$.  By the definition of a star-clique transform, $V(G')=V(G-v)$ and $E(G-v)\subset E(G')$.  Thus, if $3K_1$ is induced in $G'$, then it is induced in $G-v$.  Since any induced subgraph of $G-v$ is also induced in $G$, $3K_1$ is induced in $G$ as well.  

Similarly, if $2K_2$ is induced in $G'$ and $G-v$, then it is induced in $G$. Thus we consider the case where $2K_2$ is induced in $G'$ but not in $G-v$. Let $H$ be such a subgraph of $G'$. At least one edge of $H$ is in $E(G')\setminus E(G-v)$.  Since $E(G')$ and $E(G-v)$ differ only in the edges joining vertices in $N_G(v)$, $|N_G(v)\bigcap V(H)|\geq 2$. Since $N_G(v)$ induces a clique in $G'$, $|N_G(v)\bigcap V(H)|\leq 2$.  Thus $H$ has two adjacent vertices in $N_G(v)$ and two adjacent vertices not in $N_G(v)$. The latter two vertices, together with a third vertex from $H$ and the vertex $v$, induce $2K_2$ in $G$.
\end{proof}

\begin{thm} \label{thm: n-1 critical is 1-unique}
If $G$ is a critical graph with $n$ vertices and $\td(G) \geq n-1$, then $G$ is 1-unique.
\end{thm}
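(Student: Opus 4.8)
The plan is to dispatch the two possible values of $\td(G)$ separately. When $\td(G)=n$, Lemma~\ref{lem: high td forb sub lists}(a) forces $G=K_n$, which is visibly 1-unique: every feasible labeling of $K_n$ is a bijection onto $\{1,\dots,n\}$, and any bijection is feasible, so any chosen vertex may be made the unique recipient of label $1$. The substance of the theorem lies in the case $\td(G)=n-1$, where $s(G)=1$ and, by Lemma~\ref{lem: high td forb sub lists}(b), $G$ is $\{3K_1,2K_2\}$-free; equivalently, the complement $\overline{G}$ is $\{K_3,C_4\}$-free. I would fix an arbitrary vertex $v$ and aim to show it is 1-unique by invoking Theorem~\ref{thm:starclique}: letting $H_v$ be the star-clique transform at $v$, it suffices to prove $\td(H_v)<\td(G)$. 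Since $H_v$ has $n-1$ vertices, $\td(H_v)=n-1$ exactly when $H_v\cong K_{n-1}$; hence $v$ is 1-unique if and only if $H_v$ is not complete.

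Next I would translate non-completeness of $H_v$ into a statement about $\overline{G}$. Because the star-clique transform only adds edges among $N_G(v)$, the graph $H_v$ is complete precisely when every non-edge of $G-v$ has both endpoints in $N_G(v)$. Writing $B=V(G)\setminus N_G[v]$ for the non-neighbors of $v$, this says that $B$ is a clique completely joined to $N_G(v)$; dually, in $\overline{G}$ it says that the component containing $v$ is a star with center $v$ and leaf set $B$. So I would suppose, for contradiction, that some $v$ fails to be 1-unique and therefore sits at the center of such a star component of $\overline{G}$.

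The heart of the argument is then to contradict criticality by producing a proper minor of $G$ whose tree-depth is still $n-1$; I expect this to be the main obstacle, and I would handle it through the complement. If $\overline{G}$ is disconnected, choose $x,y$ in distinct components; then $xy\in E(G)$, and adjoining $xy$ to $\overline{G}$ creates neither a triangle nor a $4$-cycle, since the endpoints share no common neighbor and are joined by no path across components. Thus $\overline{G-xy}=\overline{G}+xy$ remains $\{K_3,C_4\}$-free and $G-xy$ stays $\{3K_1,2K_2\}$-free; as $G-xy$ is not complete, Lemma~\ref{lem: high td forb sub lists} gives $\td(G-xy)=n-1=\td(G)$, and since $G-xy$ is a proper minor this contradicts criticality. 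If instead $\overline{G}$ is connected, then the star component is all of $\overline{G}$, forcing $v$ to be $\overline{G}$-adjacent to every other vertex and hence isolated in $G$; deleting the isolated vertex $v$ leaves a proper minor with $\td(G-v)=\td(G)$, again contradicting criticality. Either way every vertex is 1-unique, so $G$ is 1-unique.

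The delicate points I would verify carefully are the equivalence between $H_v\cong K_{n-1}$ and the star-component structure of $\overline{G}$ (the passage between $G$ and $\overline{G}$), and the claim that the single edge deletion preserves $\{K_3,C_4\}$-freeness of the complement. The latter is exactly what guarantees, via Lemma~\ref{lem: high td forb sub lists}(b), that the tree-depth is not lowered, and it is this tree-depth-preserving minor that supplies the contradiction with criticality.
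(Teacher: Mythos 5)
Your proof is correct, and although it opens exactly as the paper's does---splitting on $\td(G)\in\{n-1,n\}$, invoking Theorem~\ref{thm:starclique}, and observing that a non-1-unique vertex $v$ forces the star-clique transform $H_v$ to be $K_{n-1}$, i.e., that $B=V(G)\setminus(N_G(v)\cup\{v\})$ is a clique completely joined to $N_G(v)$---the way you reach the final contradiction is genuinely different. The paper stays inside $G$ and localizes at $v$: it takes an edge $e$ incident with $v$, notes that criticality forces $\td(G-e)=n-2$, so by Lemma~\ref{lem: high td forb sub lists} the graph $G-e$ acquires an induced $3K_1$ or $2K_2$ that $G$ lacks; hence $e$ is the central edge of an induced $P_4$ or the edge of an induced $K_2+K_1$ in $G$, and either pattern contradicts the fact that every vertex outside $N_G(v)\cup\{v\}$ is adjacent to all of $N_G(v)$. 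You instead pass to the complement and split on whether $\overline{G}$ is connected: if not, an edge of $G$ joining two components of $\overline{G}$ can be deleted without creating a triangle or a $4$-cycle in the complement, so by Lemma~\ref{lem: high td forb sub lists} the tree-depth stays at $n-1$, contradicting criticality; if so, the star structure of $v$'s component forces $\overline{G}$ itself to be a star, making $v$ isolated in $G$, and deleting $v$ then preserves tree-depth, again contradicting criticality. Your disconnected case is a pleasant standalone fact---the complement of any $n$-vertex $(n-1)$-critical graph must be connected---and it does not even use the star structure, whereas the paper's argument never leaves $G$ and exploits only edges at the offending vertex. Both routes consume the same two external results (Theorem~\ref{thm:starclique} and Lemma~\ref{lem: high td forb sub lists}); yours trades the paper's local induced-subgraph analysis for a global connectivity dichotomy in the complement, at the small cost of the back-and-forth translation between $G$ and $\overline{G}$.
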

\begin{proof}
The only $n$-vertex graph with tree-depth $n$ is $K_n$, which is 1-unique, so suppose that $G$ is $(n-1)$-critical. If $G$ is not 1-unique, then by Theorem~\ref{thm:starclique} there exists a vertex $v$ in $G$ such that performing a star-clique transform on $G$ at $v$ results in a graph $G'$ such that $\td(G') \geq \td(G)$. Then $n-1=\td(G) \leq \td(G') \leq n(G')=n-1$, so $\td(G')=n(G')$, and in fact $G'$ is a complete graph. It follows that in $G$ each vertex not in $N_G(v)$ is adjacent to every vertex of $N_G(v)$. Now let $e$ be an edge incident with $v$ in $G$. Since $G$ is $(n-1)$-critical, $\td(G-e)=n-2$, and by Lemma~\ref{lem: high td forb sub lists}, $G-e$ contains an induced $3K_1$ or $2K_2$ while $G$ contains neither. In $G$ the edge $e$ is either the central edge in an induced $P_4$ or the edge in an induced $K_2+K_1$. Both of these possibilities lead to contradictions, however; if $v$ is a midpoint of an induced $P_4$, then the non-neighbor of $v$ on the path belongs to $V(G)-N_G(v)-\{v\}$ and hence is adjacent to every vertex of $N_G(v)$, including both neighbors of $v$ on the path, a contradiction. If instead $v$ is an endpoint of the edge in an induced $K_2+K_1$, then $v$'s neighbor and non-neighbor are non-adjacent, a contradiction, since vertices in $N_G(v)$ and vertices outside of $N_G(v)$ are adjacent to each other. We conclude that $n$-vertex, $(n-1)$-critical graphs are 1-unique.
\end{proof}

\section{Separating 1-uniqueness and criticality}

As mentioned in the introduction, the property of 1-uniqueness leads a graph to have many properties in common with critical graphs. In particular, although the converse of Conjecture~\ref{conj: false} is false, in~\cite{BarrusSinkovic15} the authors showed that the following is true.

\begin{thm}(\cite{BarrusSinkovic15})
Let $G$ be a connected 1-unique graph with tree-depth $k$.
\begin{enumerate}
\item[\textup{(a)}] If $v$ is any vertex of $G$, then $\td(G-v) < k$.
\item[\textup{(b)}] If $e$ is any edge of $G$, and $G'$ is obtained from $G$ by contracting edge $e$, then $\td(G') < k$.
\item[\textup{(c)}] There exists a set $S$ of edges of $G$ such that $G-S$ is a $k$-critical graph.
\end{enumerate}
\end{thm}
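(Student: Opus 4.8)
My plan is to derive all three parts from the single tool that $1$-uniqueness provides: the ability to produce an optimal feasible labeling in which a prescribed vertex is the unique one receiving the label $1$. For part (a), I would fix a vertex $v$ and take an optimal labeling $\gamma$ of $G$ in which $v$ alone receives label $1$. Restricting $\gamma$ to $G-v$ uses only the labels $\{2,\dots,k\}$, and any path in $G-v$ between two equally labeled vertices is also a path in $G$ that avoids $v$; hence the higher-labeled internal vertex guaranteed by feasibility of $\gamma$ still lies on this path in $G-v$. Thus the restriction, relabeled downward by $1$, is a feasible labeling of $G-v$ with $k-1$ labels, giving $\td(G-v)\le k-1<k$.

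For part (b), I would write $e=xy$ and take an optimal labeling $\gamma$ in which $x$ is the unique vertex labeled $1$. Let $z$ be the vertex of $G'=G/e$ obtained by identifying $x$ and $y$, and define a labeling $\gamma'$ of $G'$ by setting $\gamma'(z)=\gamma(y)$ and $\gamma'(u)=\gamma(u)$ for every other vertex. Since $x$ was the only vertex labeled $1$ and has been absorbed into $z$ (which now carries $\gamma(y)\ge 2$), the labeling $\gamma'$ uses only labels from $\{2,\dots,k\}$. It then remains only to check that $\gamma'$ is feasible, after which $\td(G')\le k-1<k$.

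The technical heart of the argument, and the step I expect to be the main obstacle, is verifying feasibility of $\gamma'$, because paths passing through the contracted vertex $z$ must be tracked carefully. Given a path $P'$ in $G'$ whose endpoints share a $\gamma'$-label, I would lift it to a path $P$ in $G$ by replacing $z$ (where it occurs) with $x$, with $y$, or with the segment $x,y$ along $e$, choosing the substitution so that the incident edges of $P'$ are realized in $G$ and so that, if $z$ is an endpoint of $P'$, the copy $y$ carrying $z$'s label becomes the corresponding endpoint of $P$. The endpoints of $P$ then share a $\gamma$-value of at least $2$, so feasibility of $\gamma$ yields an internal vertex $w$ of $P$ with strictly larger label. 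The crucial observation is that $w\ne x$, since $\gamma(x)=1$ is strictly smaller than every label in sight; consequently $w$ is either an original vertex lying on $P'$ or the copy $y$, whose label is carried by the internal vertex $z$ of $P'$. In both cases $\gamma'$ assigns a strictly larger value to an internal vertex of $P'$, so $\gamma'$ is feasible. (The same observation, applied to the two-edge path $u,x,y$, disposes of length-one violations, since no neighbor of $x$ may share $y$'s label.)

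Finally, for part (c) I would take $H=G-S$ to be an edge-minimal spanning subgraph of $G$ with $\td(H)=k$, which exists because deleting edges never raises tree-depth. To show $H$ is $k$-critical it suffices to check that every one-step minor of $H$ has tree-depth below $k$, since every proper minor is a minor of one of these and tree-depth is minor-monotone. Edge deletions are handled directly by the edge-minimality of $H$. For a vertex $v$, the graph $H-v$ is a spanning subgraph of $G-v$, so $\td(H-v)\le\td(G-v)<k$ by part (a); for an edge $f$, the graph $H/f$ is a spanning subgraph of $G/f$, so $\td(H/f)\le\td(G/f)<k$ by part (b). Hence all proper minors of $H$ have tree-depth less than $k$, so $H$ is $k$-critical; in particular $H$ is automatically connected, since $\td(H-v)<k=\td(H)$ for every $v$ forbids a component of strictly smaller tree-depth. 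Taking $S=E(G)\setminus E(H)$ then completes the proof.
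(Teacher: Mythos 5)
Your proof is correct, but note that there is nothing in this paper to compare it against: the theorem is quoted from \cite{BarrusSinkovic15} and stated without proof here, so your argument stands as a self-contained reconstruction of the cited result. All three parts check out. In (a) the restriction argument is immediate since the guaranteed higher-labeled internal vertex cannot be $v$. In (b) you correctly identified the crux: after giving the contracted vertex $z$ the label $\gamma(y)$, every offending path in $G/e$ lifts to a path in $G$ whose endpoints carry labels at least $2$, and the higher-labeled internal vertex supplied by feasibility of $\gamma$ cannot be $x$ (label $1$), so it survives as an internal vertex of the original path in $G/e$; your parenthetical handling of adjacent same-labeled pairs via the path $u,x,y$ closes the only remaining loophole. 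In (c) the reduction of criticality to one-step minors, with edge deletions handled by edge-minimality of $H$ and vertex deletions and contractions handled by subgraph-monotonicity together with (a) and (b), is exactly the natural route, and your observation that $H$ is forced to be connected is a correct bonus. This is the standard approach one would expect the cited paper to take, and I see no gaps.
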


Given that the operations in parts (a) and (b) are two of the operations allowed in obtaining a minor of a graph, with the third operation being edge deletion, as touched on in part (c), it is natural to wonder if some limit can be imposed on the size of the edge set $S$; if so, 1-unique graphs would in one sense be ``close'' to being critical.

Additional developments detailed in~\cite{BarrusSinkovic15}, as well as the positive result in Theorem~\ref{thm: n-1 critical is 1-unique} above, suggest the question posed in Conjecture~\ref{conj: false} of whether every critical graph is 1-unique.

In this section we provide negative answers to both questions. We begin in Section~\ref{subsec: cycle comp} by showing that no constant bound on $|S|$ is possible, in the sense that we show that there exist dense 1-unique graphs with arbitrarily many more edges than certain critical spanning subgraphs. In Section~\ref{subsec: counterexamples} we then exhibit an infinite family demonstrating the existence of non-1-unique, $k$-critical graphs for all $k  \geq 5$.

\subsection{1-uniqueness and critical spanning subgraphs} \label{subsec: cycle comp}

To see that 1-unique graphs exist from which arbitrarily many edges may be deleted without lowering the tree-depth, consider the complements of cycles. 

\begin{thm}
For every integer $n \geq 5$, the graph $\Compn$ has tree-depth $n-1$. Moreover, $\Compn$ is 1-unique.
\end{thm}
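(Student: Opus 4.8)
The plan is to handle the two assertions separately: first pin down $\td(\Compn)=n-1$ using the forbidden-subgraph characterization of Lemma~\ref{lem: high td forb sub lists}, then exhibit explicit optimal labelings to establish 1-uniqueness. Throughout I would index the vertices of $\Compn$ by $\mathbb{Z}_n$ so that $i\sim j$ in $\Compn$ exactly when $i-j\not\equiv\pm1\pmod n$; thus each vertex fails to be adjacent only to its two cyclic neighbors.

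For the tree-depth, the upper bound is immediate: since $\Compn$ has non-adjacent pairs (the edges of $C_n$) it is not complete, so $\td(\Compn)\le n-1$ by Lemma~\ref{lem: high td forb sub lists}(a). For the matching lower bound I would verify that $\Compn$ is $\{3K_1,2K_2\}$-free and invoke Lemma~\ref{lem: high td forb sub lists}(b). Freeness from $3K_1$ is clear, since an independent set of size three in $\Compn$ would be a triangle in the triangle-free graph $C_n$. For $2K_2$, an induced copy on vertices $a,b,c,d$ with edges $ab$ and $cd$ forces the four non-edges $ac,ad,bc,bd$ to be edges of $C_n$; as $C_n$ is $2$-regular this makes $\{c,d\}$ the common cyclic neighborhood of both $a$ and $b$, which is impossible once $n\ge5$ (it would require $C_n$ to contain $C_4$). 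This is the one place the hypothesis $n\ge5$ is genuinely used, and I expect the $2K_2$ case to require the most careful bookkeeping in this portion.

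For 1-uniqueness, I would produce, for each vertex, an optimal feasible labeling in which that vertex alone receives label $1$. Since $\td(\Compn)=n-1$ and $\Compn$ has $n$ vertices, such a labeling uses each of $1,\dots,n-1$ with exactly one value repeated, and the repeated value must sit on a non-adjacent pair, i.e. on an edge of $C_n$. Fix a target vertex $v$; I would assign label $1$ to $v$, label $2$ to the pair $\{v+1,v+2\}$ (a $C_n$-edge, hence nonadjacent in $\Compn$), and labels $3,\dots,n-1$ injectively to the remaining vertices. The only feasibility obligation concerns the repeated pair, and it reduces to checking that $v+1$ and $v+2$ lie in different components of the subgraph induced by the three lowest-labeled vertices $\{v,v+1,v+2\}$. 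The key observation is that $v+1$ is adjacent in $\Compn$ to neither $v$ nor $v+2$, both being its cyclic neighbors, so $v+1$ is isolated in that induced subgraph; consequently every $v+1$-to-$v+2$ path in $\Compn$ must pass through a vertex labeled at least $3$, and the labeling is feasible and optimal. As $v$ is arbitrary, every vertex can be the unique recipient of label $1$, so $\Compn$ is 1-unique. (Alternatively, one may note that $\Compn$ is vertex-transitive and exhibit the labeling for a single vertex.) The main obstacle is that the dense graph $\Compn$ admits many paths between the repeated pair; the clean resolution is to place label $1$ on a common cyclic neighbor so that the pair's lower endpoint becomes isolated among the low-labeled vertices, severing all such paths at once.
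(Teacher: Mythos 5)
Your proposal is correct and follows essentially the same route as the paper: the tree-depth is pinned down by verifying $\Compn$ is non-complete and $\{3K_1,2K_2\}$-free and invoking Lemma~\ref{lem: high td forb sub lists}, and 1-uniqueness comes from the same labeling (unique label $1$ on a vertex $v$, the repeated label $2$ on the consecutive pair $v+1,v+2$, all other labels injective), which the paper states for one vertex and extends by vertex-transitivity exactly as you note in your parenthetical. The only difference is that you spell out the $3K_1$/$2K_2$ verification that the paper leaves as an observation.
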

\begin{proof}
Observe that for $n \geq 5$, the graph $\Compn$ is not complete and contains no induced $3K_1$ or $2K_2$; from Lemma~\ref{lem: high td forb sub lists}, we conclude that $\td(\Compn)=n-1$.

We now exhibit a 1-unique labeling of $\Compn$. With the vertices of $\Compn$ denoted by $1,\dots,n$ as before, assign vertices 1 and 2 the labels 1 and 2, respectively, and for $3 \leq i \leq n$, assign vertex $i$ the label $i-1$. Vertices 2 and 3 are the only vertices receiving the same label, and it is easy to verify that every path between these two vertices passes through a vertex receiving a higher label. Thus the labeling described is feasible, and since $\Compn$ is vertex-transitive, $\Compn$ may be feasibly labeled so that any desired vertex receives the unique 1 in the labeling.
\end{proof}

Having established that $\Compn$ is 1-unique, we now define another class of graphs. Let $n=4k$, where $k$ is an integer greater than 1. Let $G_n$ denote the graph obtained from $\Compn$, with vertices denoted by $1,\dots,n$ as before, by deleting all edges of the form $\{2j,2j+2k\}$, where $j \in \{1,\dots,k\}$. In words, we form $G_n$ by proceeding along the vertices in order, alternately deleting and leaving alone the edges of $\Compn$ that correspond to pairs of antipodal vertices in $C_n$. The graph $G_3$ is illustrated in Figure~\ref{fig: G3}.
\begin{figure}
\centering
\includegraphics[width=2in]{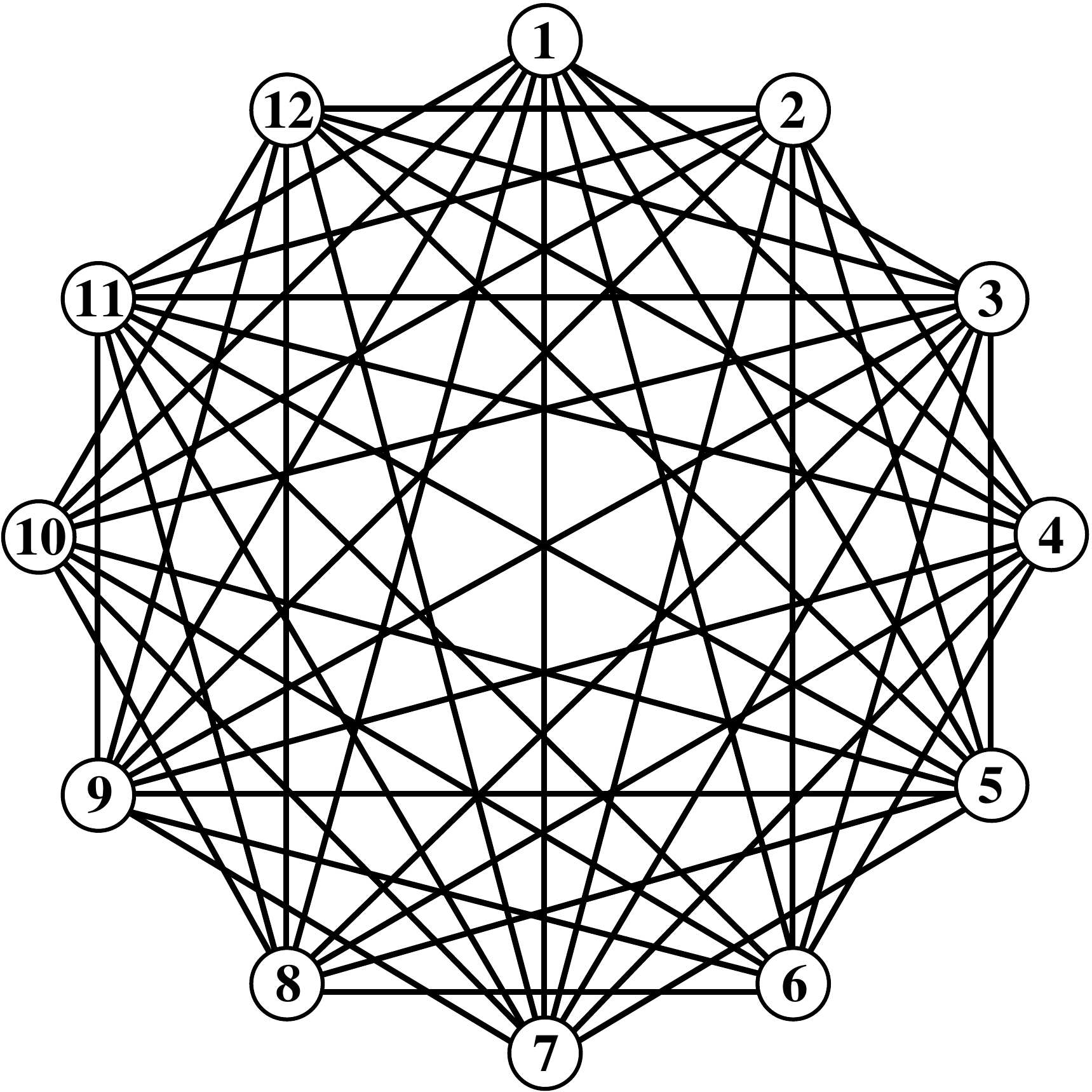}
\caption{The graph $G_3$.}
\label{fig: G3}
\end{figure}

It is straightforward to verify that the complement of $G_n$, which is a cycle in which chords join alternate pairs of antipodal vertices, contains no triangle or 4-cycle; hence $G_n$ is $\{3K_1,2K_2\}$-free, and by Lemma~\ref{lem: high td forb sub lists} we conclude that $\td(G_n) = n-1 = \td(\Compn)$. Noting that $G_n$ has exactly $k$ edges fewer than $\Compn$ yields the following.

\begin{thm} \label{thm: arbitrarily many edges}
For any $k \geq 2$ there exists a 1-unique graph and a spanning subgraph with the same tree-depth but having at least $k$ fewer edges. Hence it is possible to delete arbitrarily many edges from a 1-unique graph before obtaining a critical subgraph.
\end{thm}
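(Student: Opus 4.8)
The plan is to establish the claim by combining the two constructions already developed in this subsection, using $G_n$ (together with its supergraph $\Compn$) as the witnessing pair. The statement asks for two things: first, that for each $k \geq 2$ there exists a 1-unique graph together with a spanning subgraph of the same tree-depth having at least $k$ fewer edges; and second, the qualitative conclusion that arbitrarily many edges may be deleted from a 1-unique graph before a critical subgraph is reached. I would treat these as two separate deliverables.

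For the first part, I would set $n = 4k$ and take the 1-unique graph to be $\Compn$, whose 1-uniqueness was established in the preceding theorem, and take the spanning subgraph to be $G_n$. The construction of $G_n$ deletes exactly the $k$ edges $\{2j, 2j+2k\}$ for $j \in \{1,\dots,k\}$, so $G_n$ has precisely $k$ fewer edges than $\Compn$; this is the bookkeeping step already recorded in the paragraph preceding the statement. The essential point to invoke is that $\td(G_n) = n-1 = \td(\Compn)$, which follows because the complement of $G_n$ is triangle- and $C_4$-free (hence $G_n$ is $\{3K_1, 2K_2\}$-free and not complete), so Lemma~\ref{lem: high td forb sub lists}(b) applies. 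This gives a 1-unique graph and a spanning subgraph of equal tree-depth differing by $k$ edges, as required.

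For the second, more interpretive conclusion, I would argue that because $\td(G_n) = \td(\Compn)$, deleting the $k$ chosen edges from $\Compn$ does not lower the tree-depth at any stage, so $\Compn$ is not ``close'' to critical in any constant-bounded sense. To make the critical-subgraph language precise I would appeal to part (c) of the cited theorem: since $\Compn$ is connected and 1-unique, there is an edge set $S$ with $\Compn - S$ being $(n-1)$-critical. The point is that any such $S$ must be large — at least of size on the order of $k$ — because a critical spanning subgraph cannot contain a proper spanning subgraph of the same tree-depth, yet $G_n$ witnesses a spanning subgraph of $\Compn$ with tree-depth still $n-1$ obtained by deleting $k$ edges. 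Thus no constant bound on $|S|$ is possible, and letting $k \to \infty$ yields the stated conclusion.

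The main obstacle I anticipate is the second part: one must argue carefully that the deletable edge set genuinely grows, rather than merely exhibiting one pair of graphs. The cleanest route is to observe that any critical spanning subgraph $M$ of $\Compn$ satisfies $\td(M) = n-1$ while being edge-minimal with this property among spanning subgraphs, so the number of edges of $\Compn$ lying outside $M$ is at least $e(\Compn) - e(M)$; bounding this below by $k$ requires knowing that $\Compn$ has many edges whose individual or collective deletion preserves tree-depth, which is exactly what $G_n$ certifies via its $\{3K_1,2K_2\}$-freeness. I would make sure to state this comparison so that the phrase ``before obtaining a critical subgraph'' is unambiguous, since the deletion process is what links the explicit $G_n$ construction to the abstract critical subgraph guaranteed by part (c).
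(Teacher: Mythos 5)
Your first part is exactly the paper's argument: the paper proves this theorem precisely by the construction preceding it, namely taking $n=4k$, citing the 1-uniqueness of $\Compn$ from the previous theorem, observing that the complement of $G_n$ has no triangle or $C_4$ so that $G_n$ is $\{3K_1,2K_2\}$-free and hence $\td(G_n)=n-1=\td(\Compn)$ by Lemma~\ref{lem: high td forb sub lists}(b), and counting the $k$ deleted edges. That portion is correct and complete.

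The gap is in your second part. You claim that \emph{every} edge set $S$ for which $\Compn-S$ is critical must have $|S|$ on the order of $k$, justified by saying that a critical graph cannot contain a proper spanning subgraph of the same tree-depth while $G_n$ is such a subgraph of $\Compn$. This is a non sequitur: criticality of $M=\Compn-S$ only constrains the minors of $M$ itself, and $G_n$ need not be a subgraph of $M$---in general the two graphs are incomparable, since $G_n$ contains the edges of $S$ while $M$ contains the antipodal edges that $G_n$ lacks. So monotonicity of tree-depth yields no contradiction with a hypothetical critical $\Compn-S$ having $|S|$ small, and your argument does not establish the universal bound; establishing it would require a genuinely different analysis, which the proposal does not supply. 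Fortunately the theorem does not need that universal statement. The second sentence (``it is possible to delete arbitrarily many edges \dots before obtaining a critical subgraph'') is existential, and it follows immediately from your first part: deleting the antipodal edges $e_1,\dots,e_k$ one at a time, each intermediate graph $\Compn-\{e_1,\dots,e_j\}$ with $j<k$ still contains an edge (namely $e_{j+1}$) whose deletion keeps the tree-depth at $n-1$, so none of these intermediate graphs is critical; hence at least $k$ deletions occur before any critical spanning subgraph can be reached. Replacing your detour through part (c) of the cited theorem with this one-sentence observation makes the proof correct, and it is what the paper's ``Hence'' implicitly asserts.
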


\subsection{Non-1-unique critical graphs} \label{subsec: counterexamples}
Having shown in Section~\ref{subsec: cycle comp} that in at least one sense there may be a big difference between 1-unique graphs and critical graphs, we now present a number of counterexamples to Conjecture~\ref{conj: false} that all critical graphs are 1-unique. Portions of these results were presented in~\cite{unpub}.

\begin{figure}
\centering
\includegraphics[width=3.5in]{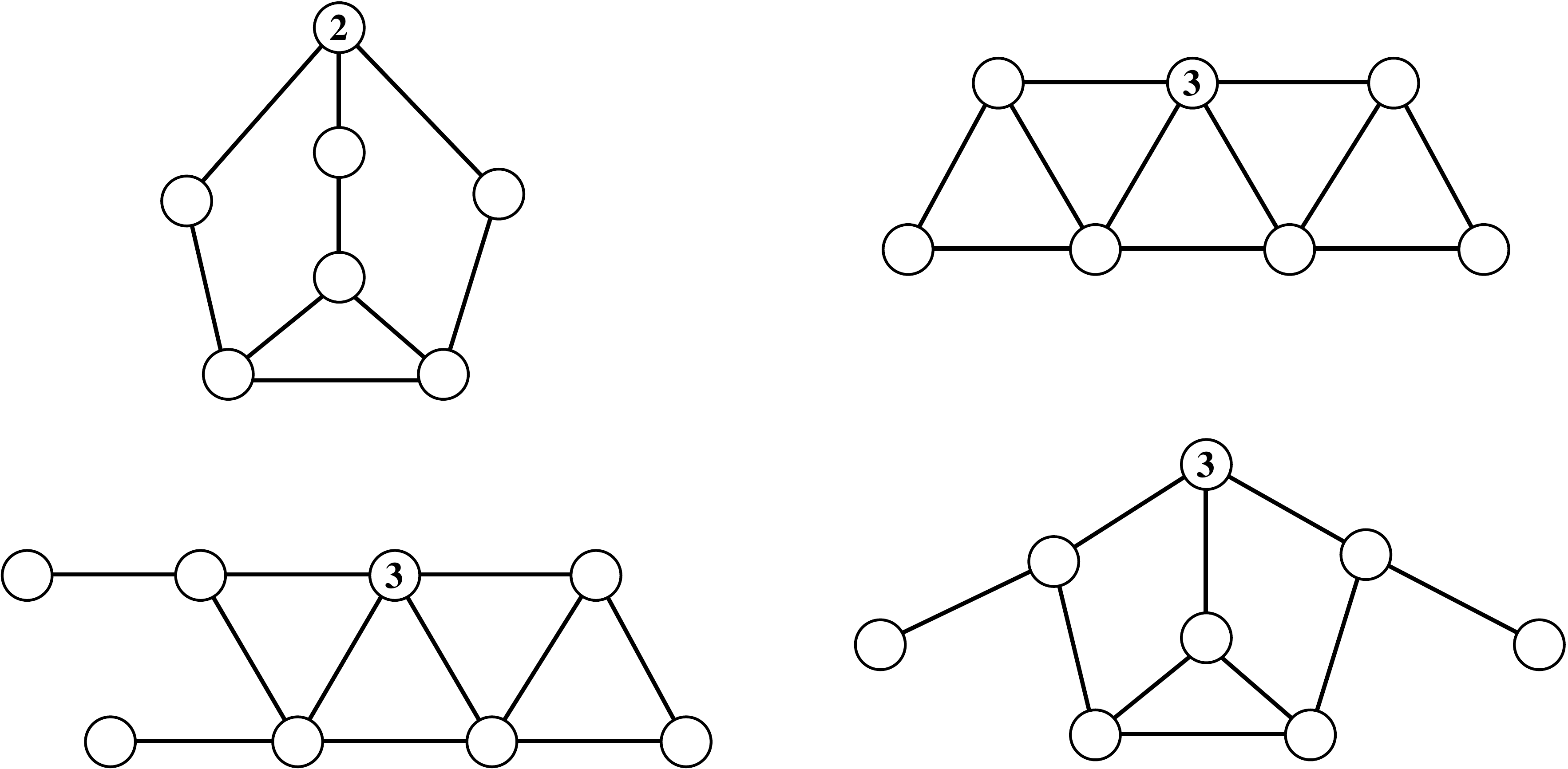}
\caption{Some 5-critical graphs with non-1-unique vertices.}
\label{fig: non-1-unique examples}
\end{figure}
The four graphs in Figure~\ref{fig: non-1-unique examples} are each 5-critical, but in each graph the labeled vertex is not 1-unique. (Instead, the label indicates the smallest label the indicated vertex $v$ can receive in an optimal labeling where $v$ shares its label with no other vertex.) These graphs and others were found using the open source software SageMath, based on an algorithm that uses the ideas of $t$-uniqueness to search for 5-critical graphs, which we now describe briefly.

The algorithm considers a graph $G$ from SageMath's dababase of small graphs. After determining that $G$ has tree-depth 5, for each vertex $v$ in $G$ the algorithm finds the smallest value of $t$ for which $v$ is $t$-unique, if such a value exists; it does this by examining every feasible labeling of $G$ with 5 labels. If each vertex of $G$ has such a value $t$, then the graph is induced-subgraph-critical~\cite{BarrusSinkovic15}. The subgraph-critical graphs are found from the induced-subgraph-critical graphs by determining whether the tree-depth decreases upon deletion of any single edge.  Critical graphs are found among the subgraph-critical graphs by testing edge contractions; our tests are simplified by a result in~\cite{BarrusSinkovic15} that assures us that it suffices to restrict our tests to edges not incident with any $1$-unique vertex.

Note now that if a graph $G$ is subgraph-critical with at most one vertex which is not $1$-unique, then $G$ is critical. Indeed, all 5-critical counterexamples to Conjecture~\ref{conj: false} with 9 or fewer vertices, like the ones in Figure~\ref{fig: non-1-unique examples}, have exactly one vertex which is not $1$-unique.

We can expand the families of counterexamples to include graphs with tree-depths other than 5; in fact, the family we will present contains a non-1-unique $k$-critical graph for any $k \geq 5$; furthermore, the tree-depth of such a graph can differ from the order of the graph by as little as 2, showing that the bound in Theorem~\ref{thm: n-1 critical is 1-unique} cannot be improved.

Before describing the family of counterexamples we need a few simple preliminaries. First, for any positive integer $k$, define a \emph{$k$-net} to be the graph constructed by attaching a single pendant vertex to each vertex of the complete graph $K_k$. The following fact is a special case of Lemma~2.7 in~\cite{unpub}.

\begin{lem}\label{lem:k-net}
A $k$-net has tree-depth $k+1$.
\end{lem}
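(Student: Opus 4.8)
The plan is to establish the tree-depth of a $k$-net by proving matching upper and lower bounds. Let $N$ denote the $k$-net, with clique vertices $c_1,\dots,c_k$ forming a $K_k$ and pendant vertices $p_1,\dots,p_k$, where each $p_i$ is adjacent only to $c_i$.

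For the upper bound $\td(N) \le k+1$, I would exhibit an explicit feasible labeling. First I would assign the labels $2, 3, \dots, k+1$ to the clique vertices $c_1, \dots, c_k$ injectively (so they all receive distinct labels), and then assign label $1$ to every pendant vertex $p_i$. To verify feasibility, the only repeated label is $1$, so I need only check paths between two pendant vertices. Any path from $p_i$ to $p_j$ (with $i \ne j$) must leave $p_i$ through its unique neighbor $c_i$, which carries a label of at least $2 > 1$; hence every such path contains a vertex with a higher label. This shows $\td(N) \le k+1$.

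For the lower bound $\td(N) \ge k+1$, the cleanest route is to use the deletion characterization of tree-depth (the elimination-game formulation quoted in the introduction): tree-depth equals the minimum number of rounds to delete all vertices when each round removes at most one vertex per current component. I would argue that no feasible labeling can use only $k$ labels. Equivalently, I can use the fact that $N$ contains $K_k$ as a subgraph, so $\td(N) \ge \td(K_k) = k$, and then rule out equality: if $\td(N) = k$, an optimal feasible labeling would need to assign $k$ labels with the top label $k$ appearing on a unique vertex of each component. Since $N$ is connected, label $k$ appears exactly once, say on a vertex $w$; deleting $w$ must drop the tree-depth, so $\td(N - w) = k-1$. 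I would then observe that for every choice of $w$, the graph $N - w$ still contains a $(k-1)$-net or a $K_{k-1}$ together with a disjoint pendant edge, forcing $\td(N-w) \ge k-1$ with the surplus structure preventing the count from closing, and derive a contradiction by induction on $k$.

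The cleaner inductive version, which I expect to be the main technical point, is this: I would induct on $k$, with the base case $k=1$ being a single edge $P_2$ of tree-depth $2 = k+1$. For the inductive step, suppose $\td(N) = k$ for a $k$-net; take an optimal feasible labeling and let $w$ be the unique vertex receiving the top label $k$. If $w$ is a pendant $p_i$, then $N - p_i$ still contains the full $K_k$, so $\td(N - p_i) \ge k$, contradicting that deleting the top-labeled vertex leaves a graph labelable with $k-1$ labels. Hence $w$ must be a clique vertex, say $c_i$; but then $N - c_i$ has the isolated pendant vertex $p_i$ in its own component together with a component containing a $(k-1)$-net on the remaining clique vertices and their pendants. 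Restricting the labeling to that $(k-1)$-net component uses at most $k-1$ labels, contradicting the inductive hypothesis that a $(k-1)$-net has tree-depth $k$. The only real obstacle is bookkeeping the component structure of $N - c_i$ carefully to confirm the $(k-1)$-net appears as an induced subgraph with its tree-depth intact; once that is verified, the two bounds combine to give $\td(N) = k+1$.
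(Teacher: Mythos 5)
Your proposal is correct, but note that the paper does not actually prove this lemma itself: it simply cites the statement as a special case of Lemma~2.7 of an unpublished manuscript, so any self-contained argument is necessarily a different route, and yours is a valid one. Both halves of your argument are sound. The upper bound via the explicit labeling (pendants all labeled $1$, clique vertices labeled $2,\dots,k+1$ injectively) is immediate, since every path between two pendants passes through a clique vertex as an interior vertex. The inductive lower bound also goes through exactly as you describe: in a connected graph, the maximum label of a feasible labeling occurs on a unique vertex $w$, and restricting the labeling to $N-w$ gives a feasible labeling of $N-w$ with at most $k-1$ labels; if $w$ is a pendant, the surviving $K_k$ forces $\td(N-w)\ge k$, and if $w=c_i$, the graph $N-c_i$ is precisely the disjoint union of the isolated vertex $p_i$ and a $(k-1)$-net, so the inductive hypothesis forces $\td(N-c_i)\ge k$; either way a contradiction. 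The ``bookkeeping'' you flag as the only obstacle is in fact trivial, exactly because $N-c_i$ decomposes so cleanly. Two editorial remarks: first, your preliminary sketch of the lower bound (the paragraph invoking ``surplus structure'') is too vague to stand on its own and should be deleted in favor of the clean inductive version, which is complete; second, Case~1 of the induction can be shortcut, since the $k$ clique vertices must receive pairwise distinct labels, so a feasible labeling with only $k$ labels must place the top label on a clique vertex, making the pendant case impossible from the outset.
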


Next is a result on the tree-depth of the Cartesian product of a complete graph with $K_2$.

\begin{lem}\label{lem: Ka Box K2}
For any positive integer $a$, the graph $K_a \Box K_2$ has tree-depth $\lceil 3a/2\rceil$.
\end{lem}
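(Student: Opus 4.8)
The plan is to establish matching upper and lower bounds on $\td(K_a \Box K_2)$. Write $G = K_a \Box K_2$ and regard its vertex set as two cliques, a ``top'' copy $A = \{a_1,\dots,a_a\}$ and a ``bottom'' copy $B = \{b_1,\dots,b_a\}$, each inducing a $K_a$, joined by the perfect matching with edges $a_ib_i$ for $1 \le i \le a$. Throughout I would use the standard equivalent formulation of tree-depth in terms of \emph{elimination forests}: $\td(H)$ is the least height of a rooted forest $F$ on $V(H)$ such that every edge of $H$ joins two vertices in an ancestor--descendant relation in $F$. Since $G$ is connected, an optimal $F$ may be taken to be a single rooted tree whose height equals $\td(G)$. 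I would also use the elementary inequality $\td(H) \le |S| + \td(H - S)$ for any $S \subseteq V(H)$, which follows by deleting the vertices of $S$ one at a time and noting that each deletion lowers tree-depth by at most one.

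For the upper bound I would exhibit one good separator. Let $S$ consist of the $\lceil a/2\rceil$ top vertices $a_{\lfloor a/2\rfloor+1},\dots,a_a$ together with the $\lfloor a/2\rfloor$ bottom vertices $b_1,\dots,b_{\lfloor a/2\rfloor}$, so $|S| = a$. Every matching edge $a_ib_i$ has an endpoint in $S$, so in $G - S$ the surviving top vertices $\{a_1,\dots,a_{\lfloor a/2\rfloor}\}$ and bottom vertices $\{b_{\lfloor a/2\rfloor+1},\dots,b_a\}$ share no index and hence span no matching edge. Thus $G - S \cong K_{\lfloor a/2\rfloor} + K_{\lceil a/2\rceil}$, whose tree-depth is $\lceil a/2\rceil$. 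The displayed inequality then gives $\td(G) \le |S| + \td(G-S) = a + \lceil a/2\rceil = \lceil 3a/2\rceil$.

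The lower bound is the heart of the argument and the step I expect to be the main obstacle. Fix an optimal elimination tree $F$. Because $A$ is a clique, its vertices are pairwise comparable in $F$ and therefore all lie on a single root-to-leaf path; the same holds for $B$. Since $V(G) = A \cup B$, the tree $F$ is exactly the union of two root-to-leaf paths, sharing a common initial segment $C$ from the root and then splitting into an $A$-branch $X$ and a $B$-branch $Y$; write $p = |C|$, $q = |X|$, $r = |Y|$, so $p+q+r = 2a$ and $\operatorname{height}(F) = p + \max(q,r)$. As $X$ lies off the path containing $B$ we have $X \subseteq A$, and similarly $Y \subseteq B$. The crucial observation is that every $x \in X$ has all of its $F$-ancestors and $F$-descendants inside $C \cup X \subseteq C \cup A$, so its matching partner, which lies in $B$, can be comparable to $x$ only if it lies in $C \cap B$; since the matching is injective this forces $q \le |C \cap B| = a - r$, hence $q + r \le a$ and $\min(q,r) \le \lfloor a/2\rfloor$. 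Consequently
\[
\td(G) = \operatorname{height}(F) = 2a - \min(q,r) \ge 2a - \lfloor a/2\rfloor = \lceil 3a/2\rceil ,
\]
matching the upper bound.

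The part requiring the most care is the structural claim that the two cliques force $F$ to be a single path splitting at most once, together with the bookkeeping that turns ``each branch vertex of $X$ must use a distinct partner inside the common segment'' into the inequality $q + r \le a$. Once this is in place the two bounds coincide for every parity of $a$, and the small instances $K_1 \Box K_2 = K_2$ and $K_2 \Box K_2 = C_4$, with tree-depths $2$ and $3$, serve as sanity checks.
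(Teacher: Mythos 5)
Your proof is correct and takes essentially the same approach as the paper: your upper-bound separator $S$ is exactly the set $T$ from which the paper builds its feasible labeling, and your lower bound---the Y-shaped elimination tree whose common initial segment $C$ must contain an endpoint of every matching edge, leaving two clique branches with $q+r\le a$---is precisely the paper's argument that any cutset has at least $a$ vertices (one per matched pair) and leaves two complete components, the larger of size at least $a-|T|/2$, giving $\td(G)\ge a+|T|/2\ge\lceil 3a/2\rceil$. The only difference is language (elimination forests versus cutsets together with the independence-number-2 observation); the combinatorial content is identical.
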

\begin{proof}
The claim is easily verified for $a \in \{1,2\}$, so suppose $a \geq 3$. Let $V_1$ and $V_2$ denote the disjoint vertex sets of the two induced copies of $K_a$ in $K_a \Box K_2$. We may group the vertices of $K_a \Box K_2$ into pairs $\{u,u'\}$, where $uu'$ is an edge and $u$ and $u'$ are elements of $V_1$ and $V_2$, respectively. Any cutset $T$ in $K_a \Box K_2$ must contain at least one vertex from each such pair, so $|T| \geq a$. Moreover, since $K_a \Box K_2$ has independence number 2, after deleting any cutset the resulting graph has exactly two components, which must be complete subgraphs, the larger of which has at least $a - |T|/2$ vertices. It follows that the tree-depth of $K_a \Box K_2$ is at least $a + |T|/2$, which is at least $\lceil 3a/2\rceil$.

To demonstrate equality, let $T$ be a subset of $V(K_a \Box K_2)$ consisting of $\lfloor a/2 \rfloor$ vertices from $V_1$ and $\lceil a/2 \rceil$ vertices from $V_2$, with no vertex in $T \cap V_1$ adjacent to any vertex in $T \cap V_2$. Label the vertices in $T$ injectively with labels from $\{\lceil a/2 \rceil+1, \dots, \lceil 3a/2\rceil\}$, and in each of $V_1 - T$ and $V_2 - T$, injectively label the vertices with $\{1,\dots,\lceil a/2 \rceil\}$. It is straightforward to verify that this is a feasible labeling using the appropriate number of colors.
\end{proof}

\begin{thm}
For any $n \geq 4$, let $H_n$ be the graph obtained by subdividing (once) all edges incident with a single vertex $v$ of $K_{n}$. The graph $H_n$ is $(n+1)$-critical but not $1$-unique; moreover, $v$ is the only non-1-unique vertex in $H_n$.
\end{thm}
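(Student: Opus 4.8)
The plan is to prove four things in turn: that $\td(H_n)=n+1$, that $H_n$ is critical, that $v$ is not $1$-unique, and that every other vertex is. Write $u_1,\dots,u_{n-1}$ for the clique surviving from $K_n$ (the former neighbors of $v$) and $w_i$ for the vertex subdividing $vu_i$, so $N(w_i)=\{v,u_i\}$ and the $u_i$ form $K_{n-1}$, each carrying a length-two arm $u_i-w_i-v$ to the common apex $v$. I would first establish $\td(H_n)=n+1$ by induction on $n$, with base case $H_4$ (using $H_3\cong C_5$, of tree-depth $4$). Using the recursion $\td(G)=1+\min_x\td(G-x)$ for connected $G$, it suffices to show $\td(H_n-x)\ge n$ for every $x$ with equality somewhere. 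Deleting $v$ leaves exactly the $(n-1)$-net, of tree-depth $n$ by Lemma~\ref{lem:k-net}; deleting any $u_i$ or any $w_i$ leaves a graph with the induced copy of $H_{n-1}$ on $\{v\}\cup\{u_m,w_m: m\ne i\}$, so by induction its tree-depth is at least $\td(H_{n-1})=n$. Hence the minimum is $n$ and $\td(H_n)=n+1$.

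For criticality I would show every single minor operation strictly lowers the tree-depth. Since deleting a non-isolated vertex is dominated by deleting an incident edge (a subgraph relation), it is enough to prove $\td(H_n-e)\le n$ and $\td(H_n/e)\le n$ for each edge $e$, which I would do by exhibiting in each case an elimination tree of height $\le n$. The organizing idea is that $v$'s adjacency to all $n-1$ arms is exactly what pins the height at $n+1$: rooting at $v$ and chaining the clique below forces the arm of the deepest clique vertex to level $n+1$, and each permitted operation relieves this by one level. For the two arm-edge contractions (of $w_iu_i$ or of $vw_i$) the result is, up to isomorphism, the graph $H''$ consisting of $K_{n-1}$ with one length-one arm and $n-2$ length-two arms to a common apex; rooting $H''$ at the apex and chaining the clique with the short-arm vertex placed deepest sends every long arm only to level $n$. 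For a clique-edge deletion $u_iu_j$ the vertices $u_i,u_j$ become incomparable and may be made siblings, collapsing one clique level so the arms again reach only level $n$; for a clique-edge contraction the merged vertex carries two arms and is placed deepest, again keeping the height at $n$. The two arm-edge deletions are the exception where rooting at $v$ does not help, and there I would instead start from the natural height-$n$ decomposition of $H_n-w_i$ (in which $v$ already sits at level $n-1$) and reattach the detached pendant one level below.

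I would settle $1$-uniqueness with the star-clique criterion of Theorem~\ref{thm:starclique}: $x$ is $1$-unique if and only if the star-clique transform at $x$ has tree-depth $<\td(H_n)=n+1$. The transform at $v$ makes the arms into a clique and produces $K_{n-1}\Box K_2$ (two copies of $K_{n-1}$ joined by the matching $u_iw_i$); by Lemma~\ref{lem: Ka Box K2} its tree-depth is $\lceil 3(n-1)/2\rceil$, which is at least $n+1$ for all $n\ge4$, so $v$ is not $1$-unique. By the symmetry of $H_n$ it then remains to treat $u_1$ and $w_1$, and the transform at either is isomorphic to the graph $H''$ already analyzed, whose tree-depth is $\le n<n+1$; hence both are $1$-unique and $v$ is the only exception. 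I note that this step reuses the arm-contraction computation from the criticality argument, since contracting $w_iu_i$ or $vw_i$ yields the same graph $H''$.

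I expect the main obstacle to be the criticality step: verifying that each of the six elementary operations drops the tree-depth. Because $v$'s adjacency to all arms is solely responsible for the extra level, every case must be shown to remove precisely that one level, and the correct elimination tree is genuinely different for the clique operations, the arm contractions, and the arm deletions. Assembling and fully verifying this case-specific family of height-$n$ decompositions — rather than relying on the naive ``root at $v$'' tree, which fails exactly for the arm deletions — is the delicate part of the argument.
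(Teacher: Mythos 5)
Your proposal is correct and follows essentially the same route as the paper: the same induction for $\td(H_n)=n+1$ (via the $(n-1)$-net and the induced copy of $H_{n-1}$, with $H_3\cong C_5$ as base case), the same case analysis over clique/arm edge deletions and contractions for criticality, and the identical star-clique arguments ($K_{n-1}\Box K_2$ at $v$, and the arm-contraction graph at every other vertex). The only difference is cosmetic: you certify each case with an explicit elimination tree of height $n$, whereas the paper writes down the equivalent feasible labelings with $n$ labels.
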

\begin{proof}
In the following, let $A_n$ denote the vertices of degree $2$ incident with $v$ in $H_n$, and let $B_n$ denote $V(H_n)-v-A_n$; note that the $n-1$ vertices in $B_n$ form a clique in $H_n$, and each vertex in $B_n$ is adjacent exactly to the other vertices of $B_n$ and to a single vertex in $A_n$ (with each vertex in $A_n$ having a single neighbor in $B_n$).

To see that $\td(H_n) \leq n+1$, injectively label the vertices of $B_n$ with labels $2,\dots,n$, label each vertex in $A_n$ with $1$, and label $v$ with $n+1$. Under this labeling only vertices in $A_n$ receive a common label, and each path joining two vertices in $A_n$ contains a vertex outside $A_n$, which has a higher label than $1$.

For convenience in proving that $\td(H_n) \geq n+1$, we now construct a graph $H_3$ in the same way that $H_n$ is defined for $n \geq 4$; note that $H_3$ is isomorphic to $C_5$. By induction we show that $\td(H_n) \geq n+1$ for all $n \geq 3$). 

Observe that $H_3$ has tree-depth $4$, as desired. Now suppose that for some integer $k \geq 3$ we have $\td(H_k) \geq k+1$. Now consider the result of deleting a vertex from $H_{k+1}$. If the vertex deleted is $v$, the remaining graph is isomorphic to a $k$-net, which by Lemma~\ref{lem:k-net} has tree-depth $k+1$. Deleting any vertex from $A_{k+1}$ or from $B_{k+1}$, along with its neighbor in the other set, leaves a copy of $H_k$, which by our induction hypothesis has tree-depth $k+1$. Thus $\td(H_{k+1}) \geq 1 + \td(H_k) \geq (k+1)+1$, as desired.

We now show that $H_n$ is critical. Note that if $u$ is any vertex in $A_n$, and if $w$ is the neighbor of $u$ in $B_n$, then each of $H_n - uv$ and $H_n-uw$ may be feasibly colored by labeling $v$ and $w$ with $2$, labeling all of $A_n$ with $1$, and injectively labeling the vertices of $B_n - u$ with colors from $3,\dots,n$.

If $w,w'$ are vertices in $B_n$, we feasibly color $H_n - ww'$ by labelling $w$ and $w'$ with $1$, labeling all vertices in $A_n$ with $2$, labeling $v$ with $3$, and injectively labeling the vertices of $B_n - \{w,w'\}$ with colors from $4,\dots,n$.

Contracting an edge of $H_n$ that is incident with a vertex in $A_n$ yields a graph isomorphic to that obtained by adding to $H_{n-1}$ a vertex $w'$ adjacent to the analogous vertex $v$ and to all vertices of $B_{n-1}$; we feasibly color this graph by labeling $w'$ and all vertices in $A_{n-1}$ with $1$, labeling $v$ with $2$, and injectively labeling vertices in $B_{n-1}$ with $\{3,\dots,n\}$.

Contracting an edge $w_1w_2$ of $H_n$, where $w_1,w_2 \in B_n$, yields a graph that can be feasibly colored in the following way: label all vertices of $A_n$ with 1, label the vertex replacing $w_1$ and $w_2$ with 2, label $v$ with 3, and injectively label the vertices of $B_n-\{w_1,w_2\}$ with $\{4,\dots,n\}$. Having shown now that deleting or contracting any edge results in a graph with smaller tree-depth (and, it follows, the same holds if we delete any vertex), we conclude that $G$ is $(n+1)$-critical.

Now by Theorem~\ref{thm:starclique}, $v$ will be 1-unique if and only if performing a star-clique transform at  $v$ in $H_n$ yields a graph with a lower tree-depth. Observe that a star-clique transform on $v$ actually yields $K_{n-1} \Box K_2$. By Lemma~\ref{lem: Ka Box K2}, $\td(K_{n-1}\Box K_2) = \lceil \frac{3}{2}(n-1)\rceil$, which is at least $n+1$ for $n \geq 4$, rendering $v$ non-1-unique. Note, however, that a star-clique transform on any vertex of $A_n$ or $B_n$ has the same effect as contracting an edge between $A_n$ and $B_n$ in $H_n$, which lowers the tree-depth, as we verified above; hence all vertices of $H_n$ other than $v$ are 1-unique.
\end{proof}

\section{A family of dense 1-unique critical graphs}

In this section we present a pleasing family of graphs that have appeared in the literature but were previously not known to be critical with respect to tree-depth. Though the previous two sections have established results separating criticality from 1-uniqueness, our proof in this section will use 1-uniqueness to efficiently establish criticality.

For any positive integer $k$, the \emph{Andr\'{a}sfai graph} $\And(k)$ is defined to be the graph with vertex set $V=\{0,\dots,3k-2\}$ where edges are defined to be pairs $i,j$ (assume that $i>j$) such that $i-j$ is congruent to 1 modulo 3. The graph $\And(5)$ is shown in Figure~\ref{fig: And(5)}. Andr\'{a}sfai graphs are discussed in~\cite{Andrasfai64,GodsilRoyle}. It is easy to see that $\And(k)$ is a circulant graph and a Cayley graph. As we will see, these graphs also have pleasing properties regarding tree-depth.
\begin{figure}
\centering
\includegraphics[width=2in]{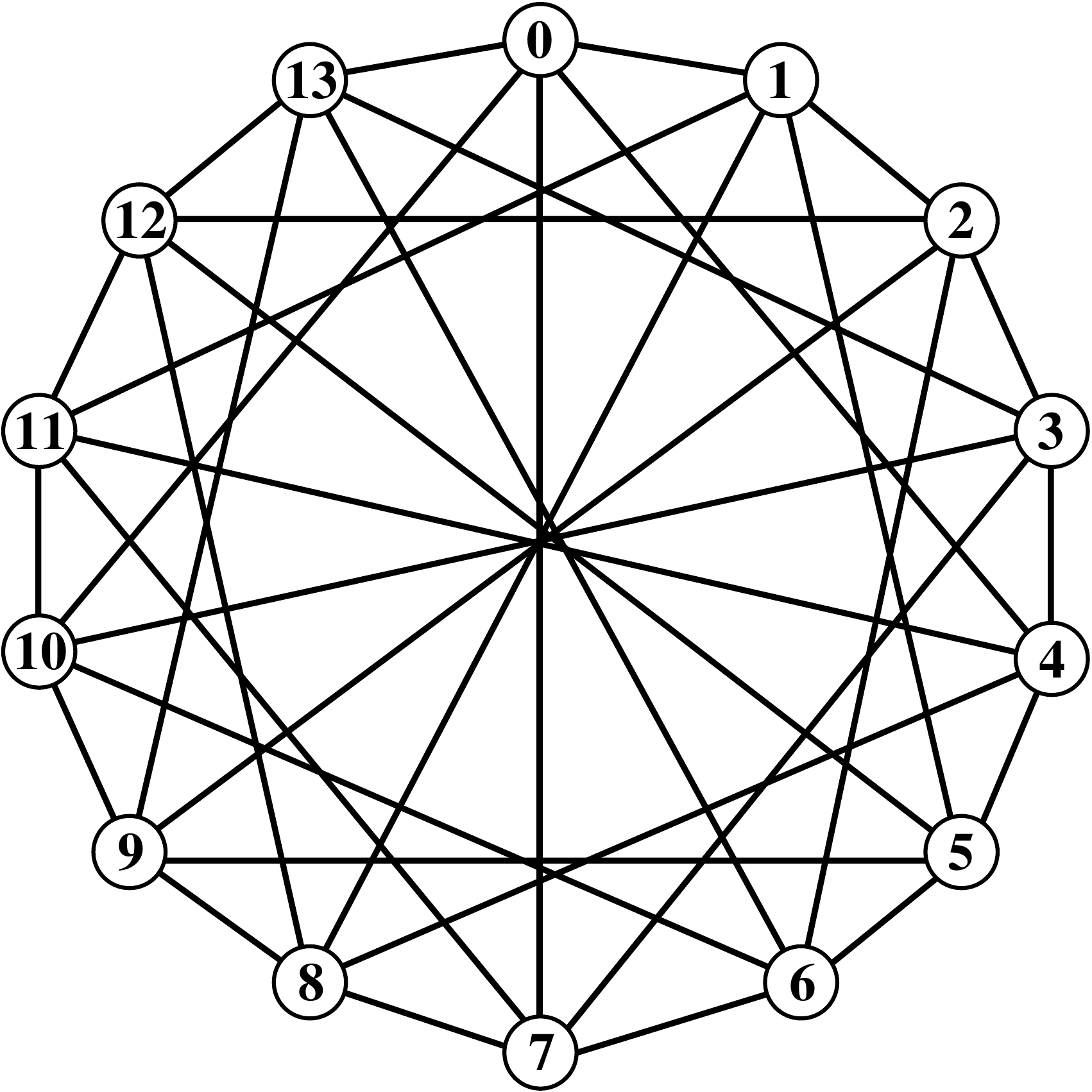}
\caption{The Andr\'{a}sfai graph $\And(5)$.}
\label{fig: And(5)}
\end{figure}

In the following, let $\And(k)-v$ denote the graph obtained by deleting a vertex from $\And(k)$; since $\And(k)$ is vertex-transitive, this graph is well-defined up to isomorphism.

\begin{lem}\label{lem: Andrasfai connectivity}
For all $k \geq 1$, the graph $\And(k)$ is $k$-connected.
\end{lem}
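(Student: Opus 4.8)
The plan is to work with the circulant description of $\And(k)$ and show its vertex-connectivity attains the maximum value possible for a $k$-regular graph. First I would record that $\And(k)$ is the circulant on $\mathbb{Z}_n$ with $n=3k-1$ and connection set $S=\{1,4,7,\dots,3k-2\}=\{3j+1 : 0\le j\le k-1\}$, so that $\And(k)$ is $k$-regular and hence $\kappa(\And(k))\le k$. (The case $k=1$ gives $K_2$ and is trivial, so assume $k\ge 2$.) Assume for contradiction that $\kappa(\And(k))\le k-1$. Here a \emph{fragment} is a nonempty $A\subseteq V$ with $V\setminus(A\cup N(A))\neq\emptyset$ and $|N(A)|=\kappa$, and an \emph{atom} is a fragment of minimum cardinality. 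The single arithmetic fact used throughout is that $n=3k-1\equiv 2\pmod 3$, so $3\nmid n$.

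The key structural step is to argue that an atom $A$ may be taken to be a subgroup of $\mathbb{Z}_n$. Because $\And(k)$ is a connected vertex-transitive graph, its atoms form a system of blocks of imprimitivity for $\mathrm{Aut}(\And(k))$; restricting to the regular translation action of $\mathbb{Z}_n$, this block system is the coset partition of some subgroup, and after translating so that $0\in A$ we may assume $A=\langle m\rangle=\{0,m,2m,\dots,(a-1)m\}$, where $a=|A|$ divides $n$ and $m=n/a$. With $A$ a subgroup, $N(A)=(A+S)\setminus A$ is a union of full cosets of $A$: each $s\in S$ lies in the coset indexed by $s\bmod m$, so writing $R=\{\,s\bmod m : s\in S\,\}\subseteq\mathbb{Z}_m$ we have $|N(A)|=a\cdot|R\setminus\{0\}|$, and $A\cup N(A)=V$ exactly when $R=\mathbb{Z}_m$.

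Now I would count $|R|$ using $3\nmid m$ (which follows from $3\nmid n$ and $m\mid n$). Since multiplication by $3$ permutes $\mathbb{Z}_m$, the map $j\mapsto(3j+1)\bmod m$ sends the $k$ consecutive integers $0,\dots,k-1$ to distinct residues precisely when $k\le m$, giving $|R|=\min(k,m)$. The argument then splits on $a$. If $a\ge 3$ then $m=n/a\le(3k-1)/3<k$, so $|R|=m$ and $R=\mathbb{Z}_m$, forcing $V\setminus(A\cup N(A))=\emptyset$ and contradicting that $A$ is a fragment. If $a=2$ then $m=(3k-1)/2>k$, so $|R|=k$ and $|N(A)|=2\,|R\setminus\{0\}|\ge 2(k-1)\ge k$, contradicting $|N(A)|=\kappa\le k-1$. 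Finally, if $a=1$ then $A$ is a single vertex and $|N(A)|=k$, again contradicting $\kappa\le k-1$. In every case we reach a contradiction, so $\kappa(\And(k))=k$.

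The main obstacle — and the only place nontrivial machinery enters — is the reduction of an atom to a subgroup coset; everything after that is elementary counting made clean by $3\nmid(3k-1)$. I would justify this reduction either by quoting the atom theory for vertex-transitive graphs (Mader/Watkins), or, to keep the argument self-contained, by an additive-combinatorial route: a small cut makes $|A+S|$ small relative to $|A|$ and $|S|$, and Kneser's theorem on sumsets in abelian groups forces the relevant set to be (a union of cosets of) a subgroup, after which the same coset computation applies. I expect the fussiest part to be checking the equality cases of Kneser's theorem (or the precise atom statement), but the circulant structure makes the resulting subgroup and its cosets completely transparent.
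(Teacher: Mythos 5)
Your proof is correct, but it follows a genuinely different route from the paper's. The paper proceeds by induction on $k$ via Menger's theorem: for two vertices of $\And(j+1)$ (taken, by vertex-transitivity and symmetry, to be $0$ and some $a$ with $1 \le a \le 3j-2$), the induction hypothesis supplies $j$ internally disjoint paths inside the copy of $\And(j)$ induced on $\{0,\dots,3j-2\}$, and one further path is routed through the three new vertices $3j-1, 3j, 3j+1$, in each of which both $0$ and $a$ have a neighbor; this exploits the same nested structure $\And(j)\subseteq\And(j+1)$ that the paper reuses when computing tree-depth in Theorem~\ref{thm: td Andrasfai}. You instead treat $\And(k)$ as the circulant on $\mathbb{Z}_{3k-1}$ with connection set $\{1,4,\dots,3k-2\}$ (a correct identification, since $d\equiv 1\pmod 3$ if and only if $(3k-1)-d\equiv 1\pmod 3$) and invoke Watkins--Mader atom theory: atoms of a connected vertex-transitive graph form an imprimitivity system, hence under the regular translation action the atom through $0$ is a subgroup $\langle m\rangle$, and the arithmetic fact $3\nmid 3k-1$ gives $\lvert R\rvert=\min(k,m)$, after which your three cases $a\ge 3$, $a=2$ (where $2(k-1)\ge k$ because $k\ge 2$), and $a=1$ each produce the required contradiction. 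The coset computation is sound, up to one harmless imprecision: $A\cup N(A)=V$ holds exactly when $R\cup\{0\}=\mathbb{Z}_m$, not when $R=\mathbb{Z}_m$, but you only use the direction you actually establish. The trade-off between the two proofs is clear: yours rests on a quoted nontrivial theorem (the block structure of atoms in vertex-transitive graphs, or equivalently Hamidoune's subgroup-atom result for Cayley graphs; the Kneser's-theorem alternative you mention is only sketched, so the citation is doing real work), whereas the paper's induction is elementary and self-contained modulo Menger. In exchange, your method pinpoints why connectivity is as large as possible ($\kappa=\delta=k$), isolates the arithmetic reason ($3\nmid 3k-1$) that no proper nontrivial subgroup can be an atom, and would adapt to other circulant families, which the paper's explicit path-building would not do as readily.
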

\begin{proof}
By Menger's Theorem it suffices to show that between any two vertices in $\And(k)$ there are at least $k$ pairwise internally disjoint paths. We prove this by induction on $k$. When $k=1$, the graph $\And(1) = K_2$, and there is clearly a path between the two vertices. Suppose now that for some integer $j\geq 1$, the graph $\And(j)$ has $j$ pairwise internally disjoint paths between any two distinct vertices. We now consider the number of internally disjoint paths between an arbitrary pair of vertices in $\And(j+1)$. Since this graph is vertex-transitive, without loss of generality we may assume that 0 is one of the vertices of the pair; if $a$ denotes the other vertex, then by symmetry we may assume that $1 \leq a \leq 3j-2$. Observe that the induced subgraph on vertices $\{0,\dots,3j-2\}$ is isomorphic to $\And(j)$, so by the induction hypothesis there exists a set of at least $j$ pairwise internally disjoint vertices joining $1$ and $a$ and using only vertices from $\{0,\dots,3j-2\}$. Now note that vertices $0$ and $a$ each have a neighbor in $\{3j-1,3j,3j+1\}$, so we may find a path from $0$ to $a$ whose internal vertices are drawn from this set; this path is necessarily internally disjoint from each of the earlier $j$ paths. Thus $\And(j+1)$ contains at least $j+1$ pairwise internally disjoint paths between any two vertices; by induction our proof is complete.
\end{proof}

We now define a useful labeling of the vertices of $\And(k)$.

\begin{defn}
The \emph{standard labeling} of $\And(k)$ is a function $r: V(\And(k)) \to \{1,2,\dots,2k\}$ given as follows:

\[r(x) = \begin{cases}
1 & \textup{if } x = 0;\\
2 & \textup{if }x>0 \text{ and }x \equiv 0 \!\!\!\!\pmod 3;\\
\frac{2x+4}{3} & \textup{if }x \equiv 1 \!\!\!\!\pmod 3;\\
\frac{2x+5}{3} & \textup{if }x \equiv 2 \!\!\!\!\pmod 3.
\end{cases}\]

In words, the standard labeling assigns label 1 to vertex 0, labels vertices $1,2,4,5,7,8,\dots,3k-2$ (i.e. all vertices that are not multiples of 3) in order, injectively, with the labels $2,3,\dots,2k$, and assigns label 2 to all other vertices.
\end{defn}
\begin{figure}
\centering
\includegraphics[width=2in]{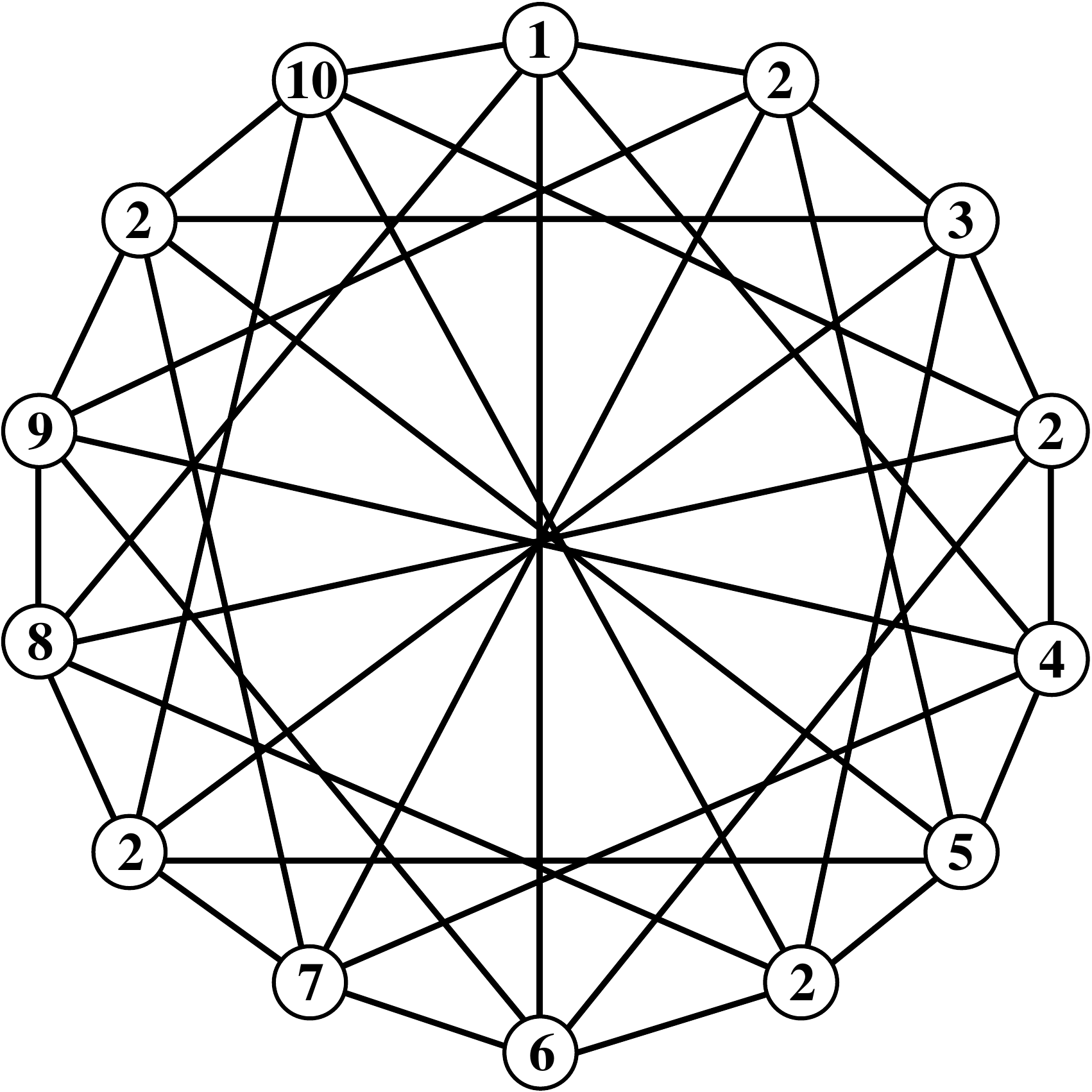}
\caption{The standard labeling of $\And(5)$.}
\label{fig: And(5) labeled}
\end{figure}

Figure~\ref{fig: And(5) labeled} shows a standard labeling of $\And(5)$. Observe that in the standard labeling of $\And(k)$ the only repeated label is 2, no two vertices with label 2 are adjacent (since the difference of any two vertices labeled 2 is a multiple of 3 or is 1 less than a multiple of 3). Moreover, the only vertex adjacent to a vertex with label 1 is vertex 1, and any path beginning at vertex 1 and passing through vertex 0 must immediately afterward pass through another vertex with a higher label (since the last vertex has a number that is congruent to 1 modulo 3). It follows that any path between two vertices with the same label must include a vertex having a higher label than that of the endpoints, so the standard labeling fits the conditions of a labeling of the vertices, though we have not yet shown that it is an optimal labeling.

\begin{thm} \label{thm: td Andrasfai}
For all $k \geq 1$, \[\td(\And(k)) = 2k \quad {\text and } \quad \td(\And(k)-v) = 2k-1.\]
\end{thm}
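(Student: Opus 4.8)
The plan is to prove the two equalities together, reading off the upper bounds from the standard labeling and reducing both lower bounds to a single inequality that I then attack by induction on $k$. For the upper bounds, the text has already checked that the standard labeling is feasible, and it uses exactly the labels $\{1,\dots,2k\}$, so $\td(\And(k)) \le 2k$. Since vertex $0$ is the unique vertex receiving label $1$, deleting it and restricting the standard labeling to $\And(k)-0$ leaves a feasible labeling (the restriction of a feasible labeling to an induced subgraph is feasible) that uses only the $2k-1$ labels $\{2,\dots,2k\}$; by vertex-transitivity this gives $\td(\And(k)-v) \le 2k-1$ for every $v$.

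Next I reduce the two lower bounds to one. Because $\And(k)$ is connected (indeed $k$-connected by Lemma~\ref{lem: Andrasfai connectivity}) and vertex-transitive, the elimination description of tree-depth gives $\td(\And(k)) = 1 + \min_u \td(\And(k)-u) = 1 + \td(\And(k)-v)$ for every $v$. Combining this with the fact (used already in the proof of Theorem~\ref{thm: high td hereditary}) that deleting one vertex lowers tree-depth by at most $1$, it suffices to prove the single inequality $\td(\And(k)) \ge 2k$: the value $\td(\And(k)-v) = 2k-1$ then follows from the displayed identity together with the matching upper bound.

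I would prove $\td(\And(k)) \ge 2k$ by induction on $k$, taking $\And(1)=K_2$ and $\And(2)=C_5$ (so $\td = 2$ and $\td = 4$) as base cases. For the inductive step with $k \ge 3$ I again exploit $k$-connectivity: since $\And(k)$ and each $\And(k)-u$ stay connected after deleting fewer than $k$ vertices, unfolding the elimination recursion twice yields $\td(\And(k)) = 2 + \min_{u,w}\td(\And(k)-\{u,w\})$. It therefore suffices to show $\td(\And(k)-\{u,w\}) \ge \td(\And(k-1)) = 2k-2$ for every pair $u,w$. Since tree-depth is monotone under the minor relation, the natural route is to exhibit $\And(k-1)$ as a minor of $\And(k)-\{u,w\}$. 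When $u$ and $w$ lie within cyclic distance $2$, there is a block of $3$ consecutive vertices containing both, and the complementary block of $3k-4$ consecutive vertices induces a copy of $\And(k-1)$ (the shift $i \mapsto i-1$ witnesses the isomorphism), so an \emph{induced} copy survives and the claim is immediate.

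The hard part is the case in which $u$ and $w$ are far apart. Here a direct check (already for $\And(3)-\{0,4\}$, which is the bipartite $2\times 3$ grid and contains no induced $C_5$) shows that no induced copy of $\And(k-1)$ can survive, so the induced-subgraph argument breaks down and one must genuinely contract: the goal is to realize $\And(k-1)$ as a proper minor by contracting edges along the Hamiltonian cycle $0\,1\,\cdots\,(3k-2)$ present in $\And(k)$, so as to ``close up'' the deleted positions while respecting the residue-$1 \pmod 3$ chord pattern. Verifying that such a contraction can be arranged for every placement of $u$ and $w$ is the main obstacle. An alternative that sidesteps it is to strengthen the inductive hypothesis to record the near-uniqueness of optimal labelings of $\And(k-1)$ (the structural feature underlying $1$-uniqueness) and to argue that the two extra vertices of $\And(k)-v$ cannot be fitted into an optimal $(2k-2)$-labeling of the embedded $\And(k-1)$ without introducing a conflicting path, thereby forcing the $(2k-1)$-st label.
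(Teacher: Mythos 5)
Your upper bounds and the reduction of both equalities to the single inequality $\td(\And(k))\ge 2k$ are sound and match the paper's (the paper likewise restricts the standard labeling and uses the fact that vertex deletion drops tree-depth by at most one). The genuine gap is in the inductive step. Unfolding the elimination recursion twice gives $\td(\And(k)) = 2 + \min_{u,w}\td(\And(k)-\{u,w\})$, so you must prove $\td(\And(k)-\{u,w\})\ge 2k-2$ for \emph{every} pair $u,w$, including adversarially placed ones. You only prove this when $u$ and $w$ lie in a block of three cyclically consecutive vertices (where an induced $\And(k-1)$ survives); for all other pairs --- which is the generic case --- you yourself identify the minor-containment claim as ``the main obstacle'' and do not establish it. Neither of your two suggested repairs (contracting along the Hamiltonian cycle to close up the deleted positions, or strengthening the induction hypothesis with near-uniqueness of optimal labelings) is carried out, and neither is routine: the contraction must simultaneously restore the residue-$1\pmod 3$ chord pattern on both arcs between $u$ and $w$, and the claimed bound for arbitrary pairs, while true, is essentially equivalent to the theorem itself, so one cannot simply cite it. As written, the proof establishes the theorem only through the base cases.

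It is worth seeing how the paper sidesteps exactly this difficulty: rather than deleting vertices chosen by the recursion, it manipulates an optimal labeling of $\And(j+1)$ directly. By Lemma~\ref{lem: Andrasfai connectivity}, $(j+1)$-connectivity forces each of the $j+1$ highest labels to appear on only one vertex (otherwise the fewer than $j+1$ vertices carrying higher labels would form a cutset). A pigeonhole count over the $3j+2$ cyclic windows of three consecutive vertices then produces one window containing two of these top-labeled vertices, and one may \emph{swap} those two labels with the two highest labels overall --- a swap of uniquely occurring labels preserves feasibility precisely because every repeated label is smaller than both labels being exchanged. After a rotation, the two highest labels sit on $\{3j-1,3j,3j+1\}$, so the induced $\And(j)$ on $\{0,\dots,3j-2\}$ is labeled with at least $2j$ values disjoint from them, giving $2j+2$ in total. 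In effect, the paper gets to \emph{choose} where the high labels (your ``deleted'' vertices) sit, which is exactly the freedom your elimination-recursion framing surrenders. If you want to rescue your own approach, this label-swapping idea is the missing ingredient.
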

\begin{proof}
Note that if $\td(\And(k)) = 2k$, it easily follows that $\td(\And(k)-v) = 2k-1$, so we restrict our attention to the graphs $\And(k)$.

Since the standard labeling of $\And(k)$ has $2k$ as its highest label value, $\td(\And(k)) \leq 2k$. To show that $\td(\And(k)) \geq 2k$ we proceed by induction. Note that $\And(1)$ has tree-depth $2$, and suppose that $\td(\And(j))=2j$ for some positive integer $j$.

Now fix an optimal labeling of $\And(j+1)$. By Lemma~\ref{lem: Andrasfai connectivity}, $\And(j+1)$ is $(j+1)$-connected and hence the highest $j+1$ labels appear only once in the labeling. By an application of the pigeonhole principle, there must exist three consecutive vertices in $\{0,\dots,3j+1\}$ (where consecutivity is determined modulo $3j+2$) such that the labels on these vertices include two values from the highest $j+1$ labels. Exchange the labels on these vertices with those on the vertices receiving the highest two labels; since the only repeated labels in the labeling have a value not from the $j+1$ highest values, the resulting labeling is still a feasible, optimal labeling of $\And(j+1)$. Now by symmetry, we may assume that the highest two labels occur among the vertices $\{3j-1, 3j, 3j+1\}$. Since the induced subgraph on vertices $\{0,\dots,3j-2\}$ is equal to $\And(j)$, by the induction hypothesis the labeling must use at least $2j$ labels on this subgraph, which means that the optimal labeling of $\And(j+1)$ uses at least $2j+2$ distinct labels, and the induction is complete.
\end{proof}

\begin{thm} \label{thm: And is 1-unique}
For all $k \geq 1$, both $\And(k)$ and $\And(k)-v$ are 1-unique.
\end{thm}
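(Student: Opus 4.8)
For $\And(k)$ itself the claim is almost immediate, and I would dispatch it first. The standard labeling $r$ is a feasible labeling using exactly $2k=\td(\And(k))$ labels, hence optimal by Theorem~\ref{thm: td Andrasfai}, and in it vertex $0$ is the unique vertex receiving label $1$. Since $\And(k)$ is vertex-transitive, composing $r$ with an automorphism that carries any prescribed vertex to $0$ yields an optimal feasible labeling in which that vertex is the unique $1$; thus every vertex of $\And(k)$ is 1-unique.

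The substance is the assertion for $\And(k)-v$, which is not vertex-transitive. By vertex-transitivity of $\And(k)$ I may assume $v=0$ and must show that every $w\in\{1,\dots,3k-2\}$ is 1-unique in $\And(k)-0$, a graph of tree-depth $2k-1$. The plan is to produce, for each such $w$, an optimal feasible labeling of the full graph $\And(k)$ in which $w$ is the unique vertex labeled $1$ while the deleted vertex $0$ receives a label $\ell\geq 3$ that $0$ alone carries. Deleting vertex $0$ from such a labeling leaves a feasible labeling of $\And(k)-0$ using exactly $\{1,\dots,2k\}\setminus\{\ell\}$; a monotone relabeling (decreasing by one every label exceeding $\ell$) then gives a feasible labeling with label set $\{1,\dots,2k-1\}$ in which $w$ is still the unique $1$. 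Since vertex deletion and order-preserving relabeling both preserve feasibility, this exhibits $w$ as 1-unique.

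To build these labelings I would again apply automorphisms of $\And(k)$ to the standard labeling $r$. Writing $\sigma\colon x\mapsto x+1$ and $\rho\colon x\mapsto -x$ (modulo $3k-1$), the map $\sigma$ is an automorphism because $\And(k)$ is a circulant and $\rho$ is one because the connection set is symmetric. For a target $w$, both the rotation $g=\sigma^{-w}$ and the reflection $g=\sigma^{w}\rho$ satisfy $g(w)=0$, so $r\circ g$ is an optimal feasible labeling making $w$ the unique $1$; under these two choices vertex $0$ receives the label $r(-w)$ or $r(w)$, respectively. In $r$ the vertices bearing a label in $\{1,2\}$ are exactly $\{0\}$ (label $1$) together with $T:=\{1\}\cup\{3,6,\dots,3k-3\}$ (label $2$), and every label $\geq 3$ is borne by a single vertex. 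Hence it suffices to show that for each $w\neq 0$ at least one of $w$ and $-w \bmod(3k-1)$ lies outside $T\cup\{0\}$.

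The crux, and the step I expect to be the only real obstacle, is this last combinatorial fact, equivalent to $T\cap(-T)=\varnothing$. I would verify it by residues modulo $3$: each element of $T$ is the value $1$ or a positive multiple of $3$, while for $t\equiv 0\pmod 3$ one has $-t\equiv 3k-1-t\equiv 2\pmod 3$, and for $t=1$ one has $-t\equiv 3k-2$, which is neither $1$ nor a multiple of $3$ for $k\geq 2$; in every case $-t\notin T$. Consequently, if $w\notin T$ I use the reflection labeling so that $0$ receives $r(w)\geq 3$, and if $w\in T$ then $-w\notin T$ and I use the rotation labeling so that $0$ receives $r(-w)\geq 3$; either way vertex $0$ carries a unique label $\geq 3$, which completes the deletion-and-relabeling argument. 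The case $k=1$ is trivial, as $\And(1)-v\cong K_1$.
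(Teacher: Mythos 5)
Your proof is correct and is essentially the paper's own argument: both rely on the standard labeling, the automorphisms of the circulant (rotations and the reflection $x \mapsto -x$, the latter being exactly the paper's $\phi$), and the same key modular-arithmetic fact that no vertex carrying label $2$ has its negation also carrying label $2$ (i.e., $T \cap (-T) = \varnothing$). The only difference is bookkeeping: the paper normalizes the vertex receiving the unique label $1$ to be $0$ and lets the deleted vertex range over $\{1,\dots,3k-2\}$, whereas you normalize the deleted vertex to be $0$ and let the target vertex range, two dual formulations related by an automorphism.
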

\begin{proof}
We observe that the standard labeling of $\And(k)$ places the label 1 only on the vertex 0; since $\And(k)$ is vertex-transitive, it follows that $\And(k)$ is 1-unique. To show that $\td(\And(k)-v)$ is 1-unique, we may assume that $k \geq 2$, since the claim is clearly true when $k=1$. It suffices by symmetry to show that for all $v \in \{1,\dots,3k-2\}$ there is a labeling of $\And(k)-v$ using $2k-1$ distinct labels and placing a unique label of 1 on vertex 0. We prove this in cases.

\textit{Case: $v$ is not 1 and not a multiple of 3.} In this case we begin by labeling $\And(k)$ with the standard labeling. We then delete vertex $v$ and reduce all labels higher than that of $v$ by 1. The result is a 1-unique labeling of $\And(k)-v$ using $2k-1$ labels, as desired.

\textit{Case: $v$ is 1 or a multiple of 3.} In this case $3k-1-v$ is neither 1 nor a multiple of 3. Since there is an automorphism $\phi:V(\And(k)) \to V(\And(k))$ mapping each vertex $x$ to $3k-1-x$ (modulo $3k-1$), we simply label each vertex $x$ of $\And(k)-v$ with $r(\phi(x))$, where $r$ is the standard labeling of $\And(k)$, producing a labeling with the desired properties.
\end{proof}

\begin{thm}
For all $k \geq 1$, both $\And(k)$ and $\And(k)-v$ are critical.
\end{thm}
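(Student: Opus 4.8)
The plan is to collapse the whole statement to a single property---that deleting any edge strictly lowers the tree-depth---and then to establish that property by exhibiting explicit economical labelings.

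First I would record the reduction. Both $\And(k)$ and $\And(k)-v$ are connected for $k\geq 2$ (by Lemma~\ref{lem: Andrasfai connectivity} the graph $\And(k)$ is $k$-connected, so $\And(k)-v$ is $(k-1)$-connected), the case $k=1$ being immediate. By Theorem~\ref{thm: And is 1-unique} both graphs are $1$-unique, and by Theorem~\ref{thm: td Andrasfai} their tree-depths are $2k$ and $2k-1$. A graph $M$ of tree-depth $t$ is $t$-critical exactly when each single-step minor operation---deleting an edge, contracting an edge, deleting a vertex---produces a graph of tree-depth less than $t$, since every proper minor is a minor of some single-step result and tree-depth is monotone under minors. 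For a connected $1$-unique graph the theorem of~\cite{BarrusSinkovic15} recalled at the start of Section~3 already supplies this drop for edge contraction and for vertex deletion; moreover vertex deletion also follows from edge deletion, because $M-v$ has the same tree-depth as $M$ with all edges at $v$ removed, and that is at most $\td(M-e)$ for any edge $e$ incident to $v$. Hence for both graphs the entire problem collapses to showing that $\td(G-e)<\td(G)$ for every edge $e$.

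For the edge-deletion property I would produce, for each edge, a feasible labeling using one fewer label than the standard labeling. Vertex-transitivity lets me assume one endpoint is the vertex $0$, and the reflection automorphism $x\mapsto 3k-1-x$ (modulo $3k-1$) pairs the difference classes, so it suffices to treat representative edges $\{0,\delta\}$ with $\delta\equiv 1\pmod 3$. The natural construction modifies the standard labeling $r$: once $\{0,\delta\}$ is deleted, the vertices $0$ and $\delta$ are non-adjacent, so I reassign $\delta$ the minimum label $1$. When $r(\delta)$ is a label borne only by $\delta$, this vacates $r(\delta)$ and leaves $2k-1$ labels in use, and the new minimum class $\{0,\delta\}$ is automatically feasible, since any path between two non-adjacent vertices sharing the globally smallest label passes through an interior vertex of higher label.

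The main obstacle is that this reassignment can damage the unique repeated label of the standard labeling, namely label $2$, which sits on vertex $1$ and on the nonzero multiples of $3$. After lowering $\delta$ to label $1$ I must verify that no two vertices still carrying label $2$ are joined by a path all of whose vertices have label at most $2$; equivalently, that the label-$2$ vertices lie in distinct components of the subgraph induced by the vertices of label at most $2$. For short edges this holds immediately, but for long (diameter-type) edges the vertex $\delta$ is adjacent to several multiples of $3$ smaller than it, and through $\delta$ these label-$2$ vertices become connected by a low path, so the crude construction fails. Overcoming this is the heart of the proof: I would either refine the labeling by additionally promoting the offending label-$2$ vertices to fresh high labels while holding the count at $2k-1$, or set up an induction on $k$ in which every edge is first rotated into the induced copy of $\And(k-1)$ on $\{0,\dots,3k-5\}$ and one shows that re-attaching the three remaining vertices $\{3k-4,3k-3,3k-2\}$ raises the tree-depth by only two rather than three. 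The same edge-deletion analysis, carried out with $2k-2$ labels, disposes of $\And(k)-v$; here one can also exploit that an edge of $\And(k)-v$ is an edge of $\And(k)$, so much of the labeling work transfers. I expect the bookkeeping in the long-edge case---ensuring feasibility of label $2$ after the promotions, uniformly in $\delta$ and $k$---to be the principal technical difficulty.
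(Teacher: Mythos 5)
Your reduction is sound and matches the paper exactly: by Theorem~\ref{thm: And is 1-unique} and the theorem from~\cite{BarrusSinkovic15} recalled at the start of Section~3, vertex deletion and edge contraction in a connected 1-unique graph already lower the tree-depth, so criticality of both $\And(k)$ and $\And(k)-v$ collapses to showing $\td(G-e)<\td(G)$ for every edge $e$. The problem is that this remaining step---which you yourself call the heart of the proof---is never actually carried out. Your construction (delete $\{0,\delta\}$, then drop $\delta$ to label $1$ in the standard labeling) fails in exactly the cases you identify: for $\delta\geq 7$ the vertex $\delta$ is adjacent to every smaller positive multiple of $3$, so distinct label-$2$ vertices become joined through the label-$1$ vertex $\delta$; and for $\delta=1$ the label $r(\delta)=2$ is not vacated, so no label is saved at all. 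The two remedies you sketch (promoting offending label-$2$ vertices to fresh high labels, or an induction re-attaching three vertices at cost two) are left as plans, with no verification that either can be made to work uniformly in $\delta$ and $k$. As it stands, the proof has a genuine gap precisely where the real work lies.

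The paper's proof avoids this difficulty by abandoning the standard labeling entirely and building a fresh labeling around an independent set. Given the deleted edge $uw$, choose a vertex $x$ adjacent to neither $u$ nor $w$ (such $x$ exists because every vertex of $\And(k)$ is adjacent to exactly one of any three consecutive vertices); label all of $N(x)$ with $1$, label both $u$ and $w$ with $2$, and label the remaining $2k-3$ vertices injectively with $3,\dots,2k-1$. Feasibility is immediate from triangle-freeness: $N(x)$ is independent, so the label-$1$ class causes no trouble, and $u,w$ have no common neighbor in $\And(k)$, so every $u$--$w$ path has at least two interior vertices, which cannot all carry label $1$. This gives $\td(\And(k)-uw)\leq 2k-1$ in one stroke, uniformly over all edges, and the same construction with $2k-2$ labels (choosing $x\neq v$, possible for $k\geq 3$) handles $\And(k)-v$. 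If you want to salvage your write-up, replacing your standard-labeling modification with this neighborhood-based labeling is the missing idea.
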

\begin{proof}
Since both $\And(k)$ and $\And(k)-v$ are 1-unique, it suffices by Theorem~\ref{thm: And is 1-unique} to show that deleting any edge from $\And(k)$ or from $\And(k)-v$ lowers the tree-depth.

We consider $\And(k)$ first. Suppose that the deleted edge is $uw$. Let $x$ be a vertex adjacent to neither $u$ nor $w$ in $\And(k)-uw$ (since $u$ and $w$ are each adjacent to exactly one of every three consecutive vertices, such a vertex exists). Label all neighbors of $x$ with 1, label $w$ and $u$ both with 2, and label the remaining $2k-3$ vertices injectively with labels from $\{3,\dots,2k-1\}$.

We claim that this labeling is feasible. Note that since $\And(k)$ contains no triangles, no vertices with label 1 are adjacent, and $\And(k)-uw$ contains no path of length 2 joining $u$ and $w$. It follows that any path between vertices with the same label must be longer than 1 edge if the label is 1, and longer than 2 edges if the label is 2. Such a path must then contain a higher label on an interior vertex than the label on the endpoints is. Thus the labeling is feasible, and $\td(\And(k) - uw) \leq 2k-1 < \td(\And(k))$.

We now show that $\And(k)-v$ is critical. This is clearly verified for $k \leq 2$, so assume $k \geq 3$. As before, it suffices to show that deleting an arbitrary edge lowers the tree-depth. Equivalently, we show that for an arbitrary edge $e$ and vertex $v$ of $\And(k)$, where $e$ is not incident with $v$, the tree-depth of $\And(k) - e - v$ is at most $2k-2$. As before, let $x$ be a vertex other than $v$ that is adjacent to neither endpoint of $e$ (such a vertex exists because $k \geq 3$). We label the neighbors of $x$ with $1$, the former endpoints of $e$ with $2$, and each of the remaining $2k-4$ vertices with a distinct label from $\{3,\dots,2k-2\}$. The same arguments used above for $\And(k)-uw$ are valid in showing that this labeling is feasible; hence $\td(
\And(k)-e-v) \leq 2k-2 < \td(\And(k)-v)$, and our proof is complete.
\end{proof}

In conclusion we remark that the Andr\'{a}sfai graphs join the cycles of order $2^n+1$ and complete graphs as critical graphs having the remarkable property that deleting any vertex yields another critical graph, something that is not true of  critical graphs in general. Interestingly, in each of the graphs in Figure~\ref{fig: non-1-unique examples} (the non-1-unique critical graphs), deleting the non-1-unique vertex from the critical graph yields another critical graph; this seems to hold for several similar counterexamples to Conjecture 1.2 that have a single non-1-unique vertex.

The Andr\'{a}sfai graphs, cycles of order $2^n+1$, and complete graphs, along with the graphs $\overline{C_6}$ and $\overline{C_7}$ are all 1-unique, critical circulant graphs (though cycle-complements in general are not critical, as shown in Section~\ref{subsec: cycle comp}). At present these graphs are the only critical graphs the authors are aware of that are circulant, vertex-transitive, or even regular, though there are almost surely other classes of examples. Though we have seen that  Conjecture~\ref{conj: false} is false for graphs in general, it would be interesting to know whether it holds for circulant graphs (or vertex-transitive or regular graphs).

With the failure of Conjecture~\ref{conj: false}, it also remains to find a different approach to prove the maximum degree property appearing in Conjecture~\ref{conj: order, max degree}  for graphs in general.

\section*{Acknowledgments}
The authors thank A.~Giannopoulou for mentioning the critical example of the triangular prism, which is $\overline{C_6}$, in a personal communication.

\bibliography{treedepth}
\bibliographystyle{elsarticle-num}

\end{document}